 \def\NN{{\mathbb N}}
 \def\ZZ{{\mathbb Z}}
 \def\QQ{{\mathbb Q}}
 \def\RR{{\mathbb R}}
 \def\J{{\mathcal J}}
 \def\C{{\mathcal C}}
 \def\B{{\mathcal B}}
 \def\iso{\cong}
 \def\rank{\operatorname{rank}}
 \def\reg{\operatorname{reg}}
 \def\sort{\operatorname{sort}}
 \def\supp{\operatorname{supp}}
\newcommand{\precdot}{\prec\mathrel{\mkern-3mu}\mathrel{\cdot}}
\newcommand{\succdot}{\mathrel{\cdot}\mathrel{\mkern-3mu}\succ}
 \newtheorem{Theorem}{Theorem}[section]
 \newtheorem{Lemma}[Theorem]{Lemma}
 \newtheorem{Corollary}[Theorem]{Corollary}
 \newtheorem{Proposition}[Theorem]{Proposition}
 \theoremstyle{definition}
 \newtheorem{Remark}[Theorem]{Remark}
 \newtheorem{Example}[Theorem]{Example}
 \newtheorem{Definition}[Theorem]{Definition}
 \newtheorem{Problem}[Theorem]{Problem}
\begin{document}

\title{Chain algebras of finite distributive lattices}
\author{Oleksandra Gasanova\textsuperscript{*}, Lisa Nicklasson\textsuperscript{\dag}}
\address{\textsuperscript{*}Universität Duisburg-Essen \\ \textsuperscript{\dag}Mälardalens universitet}
\email{oleksandra.gasanova@uni-due.de}
\email{lisa.nicklasson@mdu.se}

\thanks{}

\subjclass[2020]{Primary: 13F65; Secondary: 13H10, 06A11.}


\begin{abstract}
We introduce a family of toric algebras defined by maximal chains of a finite distributive lattice. Applying results on stable set polytopes we conclude that every such algebra is normal and Cohen-Macaulay, and give an interpretation of its Krull dimension in terms of the combinatorics of the underlying lattice.
When the lattice is planar, we show that the corresponding chain algebra is generated by a sortable set of monomials and is isomorphic to a Hibi ring of another finite distributive lattice. As a consequence it has a defining toric ideal with a quadratic Gröbner basis, and its $h$-vector counts ascents in certain standard Young tableaux. If instead the lattice has dimension $n>2$, we show that the defining ideal has minimal generators of degree at least $n$.
\end{abstract}

\maketitle

\section*{Introduction}
    In this note we study toric algebras arising from maximal chains of finite distributive lattices. Toric rings defined by combinatorial objects is an emerging area of research, motivated by its thriving interplay between toric geometry, combinatorics, and commutative algebra. Edge rings of graphs (\cite[Section 5]{binomialideals}, \cite{OH_graph}, \cite{SVV}), base rings of matroids (\cite{Blasiak}, \cite{White}), Hibi rings of posets (\cite{Enesurvey}, \cite{hibioriginal}), and rings of stable set polytopes (\cite{MOS}) are some examples of such algebras. In the present paper we introduce a new class of algebras of this type, namely those defined by maximal chains of finite distributive lattices. 

Given a finite distributive lattice $L$, we identify its elements with variables in a polynomial ring over a field $K$. Each maximal chain of elements in $L$ is then associated with a monomial, and we let $K[\C_L]$ denote the \emph{chain algebra} generated by these monomials. The goal is then to explore the properties of the chain algebra in connection to the combinatorial properties of $L$. When $L$ is planar it turns out that the algebra is isomorphic to a Hibi ring defined by an interval of Young's lattice. A connection to stable set polytopes enables us to draw conclusions in the general case, in particular it follows that chain algebras are normal and Cohen-Macaulay. Moreover, a neat formula for the dimension of a chain algebra is obtained. 


This paper is organised as follows. Section~1 reviews necessary background on lattices, graphs, convex polytopes, Hibi rings, and sortable sets of monomials.

In Section~2 we introduce the notion of a chain algebra $K[\C_L]$ of a finite distributive lattice $L$ and explain the connection of such algebras to stable set polytopes. Indeed, it is not hard to see that any chain algebra is the toric algebra of a specific face of the stable set polytope of the co-comparability graph of $L$. This implies normality and Cohen-Macaulayness of any chain algebra and allows to bound from above the degrees of minimal generators of its defining ideal. We also bound from below the maximal degree of a minimal generator of this ideal, and compute the Krull dimension of a chain algebra in terms of $L$. 

Section~3 is dedicated to the study of chain algebras of planar finite distributive lattices. The highlight of this paper is  \Cref{thm:mainplanar} which says that the following are equivalent for a finite distributive lattice $L$:

\begin{enumerate}
    \item $L$ is planar,
    \item $K[\C_L]$ can be generated by a sortable set of monomials,
    \item The defining ideal of $K[\mathcal{C}_L]$ has a quadratic Gröbner basis with respect to DegRevLex,
    \item The defining ideal of $K[\mathcal{C}_L]$ is quadratically generated,
    \item $K[\C_L]$ is Koszul,
    \item $K[\C_L]$ is a Hibi ring.

\end{enumerate}

Since the chain algebra of a  planar finite distributive lattice $L$ can be seen as a Hibi ring of another distributive lattice $L'$, it is then natural to talk about $L'$ itself. It turns out that elements of $L'$ are maximal chains of $L$, and that $L'$ can in fact be identified with an interval of Young's lattice. In addition, we discuss a combinatorial interpretation of the Hilbert series and an alternative interpretation of the Krull dimension formula for the planar case.

Inspired by the connection to Hibi rings and sortable sets of monomials, in Section~4 we take a detour 
to closely study the connection between Hibi rings and sortable sets of monomials. 

\section{Background}\label{basic}
In this section we provide a quick review of necessary background on posets and lattices, graph theory, integral convex polytopes, Hibi rings, and sortable sets of monomials. 

\subsection{Posets and lattices} Here we will review the definitions and basic properties of posets and lattices. For further reading we recommend 
\cite{Stanley}.
A \emph{partially ordered set} $P$ (or \emph{poset}, for short) is a set, together with a binary relation $\preceq$ (or $\preceq_P$ when there is a possibility of confusion), 
satisfying the axioms of reflexivity, antisymmetry and transitivity. 
In this paper all the posets will be finite.

An \emph{induced subposet} of $P$ is a subset $Q$ of $P$ endowed with a partial order $\prec_Q$ such that for $u_1,u_2,\in Q$ we have $u_1 \prec_Q u_2$ if and only if $u_1 \prec_P u_2$.
Two posets $P$ and $Q$ are \textit {isomorphic}, denoted $P\iso Q$, if there exists an order preserving bijection $\phi: P\rightarrow Q$, whose inverse is also order preserving, that is,
$u_1 \preceq_P u_2$  $\Leftrightarrow$ $ \phi(u_1)\preceq_Q \phi(u_2)$.
For $u_1,u_2 \in P$ we say that $u_1$ \textit{covers} $u_2$ (or $u_2$ \textit {is covered} by $u_1$) denoted $u_2\precdot u_1$,
if $u_2 \prec u_1$ and no element $u\in P$ satisfies $u_2 \prec u \prec u_1$.

A \emph{chain} of $P$ is an induced subposet in which any two elements are comparable.
An \emph{antichain} of $P$ is an induced subposet in which no two elements are comparable. A chain of $P$ is called \textit{maximal} if it is not contained in a larger chain of $P$.
The \textit{width} of a poset is the number of elements in the largest antichain of $P$. By Dilworth's theorem (\cite[Theorem~1.1]{Dil}), it is also the smallest number of disjoint chains needed to cover $P$.
If every maximal chain of a finite poset $P$ has the same length $r$, then we say that $P$ is \textit{pure}. In this case there is
a {unique} \emph{rank function} $\rank: P \rightarrow \{0,1,\ldots,r\}$ such that $\rank(u) = 0$ if $u$ is a minimal
element of $P$, and $\rank(u_2) = \rank(u_1) + 1$ if $u_1\precdot u_2$ in $P$. 
We will also say that $P$ has rank $r$. 
For an arbitrary poset $P$ its rank is defined as the length of its longest chain.

For $u_1, u_2 \in P$ a \emph{join} of $u_1$ and $u_2$ is an element $a\in P$ such that $a\succeq u_1$ and $a\succeq u_2$, and such that any other element $a'$ with the same property satisfies $a'\succeq a$. If a join of $u_1$ and $u_2$ exists, it is clearly unique, and is denoted by $u_1\vee u_2$.  Dually, one can define the \textit{meet} $u_1\wedge u_2$, when it exists. A \textit{lattice}
is a poset for which every pair of elements has a meet and a join. An element of a finite lattice is called \textit{join-irreducible} if it covers exactly one element, and \emph{meet-irreducible} if it is covered by exactly one element.
 A lattice is called \emph{distributive} if the operations $\vee$ and $\wedge$ distribute over each other.

An \emph{ideal} of a poset $P$ is a subset $I$ of $P$ such that if $u_1 \in I$ and $u_2\preceq u_1$, then $u_2 \in I$. In other words, an ideal is a downward closed subset of $P$. The set $\mathcal{J}(P)$ of ideals of $P$ is a poset, ordered by inclusion. Since the union and intersection of ideals is again an ideal, it follows that $\mathcal{J}(P)$ is a lattice with the join of two ideals being their union, and the meet being their intersection. From the well-known distributivity of set union and intersection over
one another it follows that $\mathcal{J}(P)$ is in fact a distributive lattice. Birkhoff's fundamental theorem for
finite distributive lattices states that the converse is true when $P$ is finite.
\begin{Theorem}[Birkhoff]
Let $L$ be a finite distributive lattice. Then $L\iso \J(P)$, where $P$ is the induced subposet of join-irreducible elements of $L$.
\end{Theorem}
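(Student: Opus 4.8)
The plan is to build an explicit order isomorphism between $L$ and $\J(P)$ and then argue that an order isomorphism between two lattices automatically respects $\vee$ and $\wedge$. For $x\in L$, set
\[
\phi(x)=\{p\in P : p\preceq x\}.
\]
Since $P$ carries the induced order, $\phi(x)$ is an ideal of $P$, so $\phi$ is a well-defined map $L\to\J(P)$, and it is plainly order preserving. To finish I must show that $\phi$ is a bijection which reflects the order, i.e. that $\phi(x)\subseteq\phi(y)$ forces $x\preceq y$; an order-preserving, order-reflecting bijection is an isomorphism of posets, and since both sides are lattices it is then automatically an isomorphism of lattices, as meets and joins are determined by the order alone.

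The crucial auxiliary fact is that in any finite lattice every element equals the join of the join-irreducibles below it, that is, $x=\bigvee\phi(x)$, with the empty join read as the bottom element $\hat 0$. I would prove this by induction on the length of the longest chain below $x$: if $x=\hat 0$ or $x$ is join-irreducible there is nothing to do, and otherwise $x$ covers at least two distinct (hence incomparable) elements $y_1,y_2$, so that $y_1 < y_1\vee y_2\preceq x$ and therefore $y_1\vee y_2=x$ since $y_1\precdot x$. The claim for $x$ then follows from the inductive claims for $y_1$ and $y_2$ together with $\phi(y_1),\phi(y_2)\subseteq\phi(x)$. Granting this, injectivity and order reflection are immediate: if $\phi(x)\subseteq\phi(y)$ then $x=\bigvee\phi(x)\preceq\bigvee\phi(y)=y$, and $\phi(x)=\phi(y)$ gives $x=y$. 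Note that this part uses only finiteness, not distributivity.

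Distributivity enters precisely in the proof of surjectivity, and this is the step I expect to be the main obstacle. Given an ideal $I\in\J(P)$, the natural candidate preimage is $x=\bigvee I$, and the inclusion $I\subseteq\phi(x)$ is clear. The reverse inclusion amounts to a ``primality'' property of join-irreducibles: if $p\in P$ and $p\preceq a_1\vee\cdots\vee a_k$, then $p\preceq a_i$ for some $i$. To establish this I would use distributivity to write
\[
p=p\wedge(a_1\vee\cdots\vee a_k)=(p\wedge a_1)\vee\cdots\vee(p\wedge a_k),
\]
with each $p\wedge a_i\preceq p$. Since $p$ covers a unique element $q$, every element strictly below $p$ lies $\preceq q$ (refine a chain from it to $p$ to reach $q$), so $p$ cannot be the join of elements all strictly below it; hence $p=p\wedge a_i$, i.e. $p\preceq a_i$, for some $i$. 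Applying this with the $a_i$ ranging over $I$ and using that $I$ is downward closed yields $\phi(x)\subseteq I$, so $\phi(x)=I$ and $\phi$ is onto. This completes the construction of the isomorphism $L\iso\J(P)$.
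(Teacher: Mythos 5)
The paper states this result purely as classical background (Birkhoff's fundamental theorem) and never proves it, so there is no internal proof to compare against; your argument has to stand on its own, and it does. It is correct, and it is essentially the canonical proof (the one in Stanley's \emph{Enumerative Combinatorics}, which the paper cites): the map $\phi(x)=\{p\in P : p\preceq x\}$, the lemma that in any finite lattice each element is the join of the join-irreducibles below it (using finiteness only), and the primality of join-irreducibles, $p\preceq a_1\vee\cdots\vee a_k \Rightarrow p\preceq a_i$ for some $i$, which is exactly where distributivity is needed for surjectivity. All three steps are argued soundly; the only point worth stating explicitly is the empty ideal $I=\emptyset$, which your convention $\bigvee\emptyset=\hat 0$ handles, since no join-irreducible lies below $\hat 0$ and hence $\phi(\hat 0)=\emptyset$.
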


The \emph{dimension} of a lattice $L\cong \J(P)$ is defined to be the width of $P$. This is well defined as $\J(P) \cong \mathcal{J}(Q)$ if and only if $P\iso Q$ (\cite[Proposition~3.4.2]{Stanley}). 
We say that $L$ is \emph{planar} if $\dim (L)\le 2$.  

It is also useful to keep in mind the following:
\begin{Theorem}[{\cite[Proposition~3.4.5]{Stanley}}]
If $P$ is a finite poset, then $\mathcal{J}(P)$ is pure of rank $|P|$. The rank of an element of $\mathcal{J}(P)$ is its cardinality as
an ideal of $P$.
\end{Theorem}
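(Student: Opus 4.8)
The plan is to reduce the entire statement to a description of the covering relations in $\J(P)$. First I would record the two extreme elements: the empty set is (vacuously) an ideal and is the unique minimal element of $\J(P)$, while the whole poset $P$ is downward closed and hence the unique maximal element. Since $\J(P)$ is a finite lattice, every maximal chain runs from $\emptyset$ up to $P$ and is saturated (otherwise one could either extend it at an endpoint or insert a cover in the middle), so controlling the rank amounts to understanding what a single cover does to an ideal.

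The heart of the argument is the claim that $I \subsetdot I'$ in $\J(P)$ if and only if $I \subset I'$ and $|I'| = |I| + 1$; equivalently $I' = I \cup \{x\}$ for a single element $x$. One direction is immediate: if the cardinalities differ by one there is no room for an ideal strictly in between. For the converse, suppose $I \subsetneq I'$ and choose $x$ to be a \emph{minimal} element of $I' \setminus I$ with respect to $\preceq_P$. The key observation is then that $I \cup \{x\}$ is again an ideal: any $w \prec x$ lies in $I'$ because $I'$ is downward closed, and $w$ cannot lie in $I' \setminus I$ without contradicting the minimality of $x$, so $w \in I$. Thus $I \subsetneq I \cup \{x\} \subseteq I'$ exhibits an intermediate ideal unless $I' = I \cup \{x\}$, so a cover adjoins exactly one element.

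With the covers understood, both assertions follow at once. Every saturated chain from $\emptyset$ to an ideal $I$ increases cardinality by exactly one at each step, so it has length $|I|$; in particular this length is independent of the chosen chain, which shows simultaneously that the rank function is well defined and that it equals $|I|$. Applying this to the top element $I = P$ shows every maximal chain has length $|P|$, so $\J(P)$ is pure of rank $|P|$.

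The only genuine obstacle is the existence step in the cover lemma, namely producing an intermediate ideal, and the subtlety there lies entirely in the choice of $x$: one must take a minimal rather than a maximal element of the difference $I' \setminus I$, so that adjoining it preserves downward closure. Everything else is bookkeeping. As a sanity check that full-length chains exist at all, I note that any linear extension $x_1, \dots, x_n$ of $P$ yields the saturated chain $\emptyset \subset \{x_1\} \subset \{x_1,x_2\} \subset \cdots \subset P$, though this is not strictly needed once purity has been deduced from the cover characterization.
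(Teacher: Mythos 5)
Your proof is correct: the cover characterization (a cover in $\J(P)$ adjoins exactly one element, proved by adjoining a \emph{minimal} element of $I'\setminus I$ to get an intermediate ideal) is exactly the right key lemma, and purity and the rank formula then follow just as you say. Note that the paper does not prove this statement at all — it is quoted as background with a citation to Stanley's book — and your argument is the standard proof found there, so there is no genuinely different route to compare.
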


\subsection{Some graph theory}
Let $G=(V,E)$ be a finite simple graph. An \emph{induced subgraph} $H=(V',E')$ of $G$ is a graph with the vertex set $V' \subseteq V$ and the edge set $E' \subseteq E$ such that every edge in $E$ with both ends in $V'$ must also be an edge of $E'$.  
An \emph{induced cycle} of graph $G$ is an induced subgraph which is a cycle. A \emph{perfect matching} of a graph is a set of edges such that each vertex is contained in exactly one edge from this set.

A subset $W\subseteq V$
is called \textit{stable} (or \textit{independent}) if $\{i, j\}\not\in E$ for all $i, j \in W$. In other words, a stable set in $G$ is $W\subseteq V$ such that no pair of vertices from $W$ are adjacent. Dually, a \emph{clique} in $G$ is $W\subseteq V$ such that every pair of different vertices from $W$ are adjacent.

 We say that a graph $G=(V,E)$ is \emph{bipartite} if $V$ can be partitioned into two stable sets, or equivalently if $G$ has no induced cycle of odd length.

A  \emph{colouring} of $G$ is a labelling of $V$ with colours such that no two adjacent vertices have the same colour. 
The smallest number of colours needed to colour $G$ is called its \emph{chromatic number}.
If for any induced subgraph $H \subset G$ the chromatic number of $H$ equals the size of the largest clique in $H$, then we say that $G$ is a \emph{perfect graph}.

Let $P$ be a poset with the underlying set $\{t_1,\ldots,t_s\}$. The (co-)comparability graph of $P$ is a graph $G$ with $V(G)=\{1,\ldots,s\}$ and $\{i, j\}\in E(G)$ if and only if $i\not=j$ and $t_i$ and $t_j$ are (in)comparable. According to Dilworth's theorem \cite{Dil}, any co-comparability graph is perfect, which we will use later in the paper.

\subsection{Integral convex polytopes}
Let $Q\subset \RR^s$ be an integral convex polytope, that is, a convex polytope whose vertices have integer
coordinates, and  let $Q \cap \ZZ^s =\{ \mathbf a_1, \ldots, \mathbf a_m\}$. Let $K[X, X^{-1}, t] = K[x_1, x^{-1}_1, \ldots, x_s, x^{-1}_s, t]$. Given an integer vector $\mathbf a = (a_1, \ldots a_s) \in \ZZ^s$, we
set $X^\mathbf a t = x^{a_1}_1\cdots x^{a_s}_s\cdot t \in K[X, X^
{-1},t]$. Then, the \textit{ toric algebra} of $Q$ is the subalgebra $K[Q]$ of $K[X, X^{-1}, t]$
generated by $\{X^{\mathbf a_1}t,\ldots, X^{\mathbf a_m}t\}$ over $K$. The toric ideal $I_Q$ of $Q$ is the kernel of a surjective homomorphism 
$\pi : K[y_1, \ldots, y_m] \to K[P]$ defined by $\pi(y_i) = X^{\mathbf a_i}t$ for $1 \le i \le m$.

An integral convex polytope is called \emph{compressed} if all of its pulling triangulations
are unimodular (see \cite{stanley_polytope} for details). Equivalently (\cite{Sturmfels}), a compressed polytope is an integral convex polytope $Q$ such that the initial ideal of $I_Q$ with respect to any reverse lexicographic monomial order on $K[y_1, y_2,\ldots ,y_m]$ is generated by squarefree monomials. If $Q$ is compressed, then all faces of $Q$ are again
compressed. 

\subsection{Hibi rings}
Let $L$ be a finite lattice and $S=K[x_a\mid a\in L]$ a polynomial ring over a field $K$. The \emph{Hibi ring} of $L$, denoted $K[L]$, is the quotient ring $S/I$ where $I$ is the ideal generated by all binomials $x_ax_b-x_{a\wedge b}x_{a\vee b}$ for $a, b \in L$. These rings were introduced by Hibi in \cite{hibioriginal}, where he proves that $K[L]$ is toric if and only if $L$ is distributive, and in this case $K[L]$ is a normal Cohen-Macaulay domain. All Hibi rings considered in this paper will be defined by distributive lattices.  

Another result from \cite{hibioriginal} is that the Hibi ring of a finite distributive lattice $L=\J(P)$ is Gorenstein if and only if $P$ is pure. It is also known that $\reg K[L]=|P|-\rank(P)-1$ and $\dim(K[L])=|P|+1$. For these and more results on Hibi rings the reader may consult the survey \cite{Enesurvey}.

Let us index the elements of $P=\{p_1, \ldots, p_d\}$ so that $p_i\prec p_j$ implies $i<j$. In other words, we are giving $P$ a natural labelling.  Let $\mathcal{L}(P)$ be the set of linear extensions of $P$, meaning the set of bijections $w: P \to \{1,2, \ldots, d\}$  respecting the partial order on $P$. A \emph{descent} of $w$ is an index $i$ such that $w(p_j)=i$ and $w(p_k)=i+1$ where $j>k$, and $\operatorname{des}(w)$ denotes the number of descents of $w$. Let $\Omega_P(m)$ denote the function counting the number of order-preserving maps $P\to \{0,\ldots, m\}$. This function is in fact a polynomial of degree $|P|$ and is called \textit{the order polynomial} of $P$.  With the natural labelling we fixed above and using equation (3.65) in Stanley's book \cite {Stanley} one obtains (note that Stanley uses a slightly different definition for $\Omega_P(m)$):

\begin{equation}\label{eq:Hilbert_series}
\sum_{m\ge 0}\Omega_P(m)z^m=\frac{\sum_{w \in \mathcal{L}(P)}z^{\operatorname{des}(\omega)}}{(1-z)^{d+1}}.
\end{equation}
It is pointed out in  \cite[Proposition~2.3]{hibihilbert} that the expression \eqref{eq:Hilbert_series} equals the Hilbert series of the Hibi ring of a $L=\J(P)$. As $K[L]$ is a Cohen-Macaulay domain, the $h$-vector is symmetric if and only if the ring is Gorenstein. 

\subsection{Sortable monomial sets}
Let $r$ be a positive integer and let  $S_r$ be the
$K$-vector space spanned by the
monomials of degree $r$ in the standard graded polynomial ring $S = K[t_1,\ldots,t_s]$. Take two monomials $u_1,u_2\in
S_r$. We write $u_1u_2 = t_{i_1}t_{i_2}\cdots t_{i_{2r}}$ with $1\le i_1\le i_2\le\ldots\le i_{2r} \le s$ and define
$$v_1=t_{i_1}t_{i_3}\cdots t_{i_{2r-1}},\quad v_2= t_{i_2}t_{i_4}\cdots t_{i_{2r}}.$$
The pair $(v_1, v_2)$ is called the \emph{sorting} of $(u_1, u_2)$. Sorting pairs of monomials in this way defines a map
$\sort: S_r \times S_r \to S_r \times S_r$.  
A subset $B \subseteq S_r$ of monomials is called \emph{sortable} if
$\sort(B \times B) \subseteq B \times B$. A pair $(u_1, u_2)$ is called \emph{sorted} if $\sort(u_1, u_2)=(u_1, u_2)$. Since $\sort(u_1,u_2)=\sort (u_2,u_1)$, we will sometimes consider unordered pairs and say that $\{u_1,u_2\}$ is sorted if either $(u_1, u_2)$ or $(u_2, u_1)$ is a sorted pair.

Let $B=\{u_1,\ldots, u_m\} \subset S_r$ be a sortable set of monomials and let $K[B]$ be the algebra
generated by $B$. Let $R=K[T_1,\ldots, T_m]$ and let $\phi : R \to K[B]$ be
the surjective $K$-algebra homomorphism defined by $T_i\mapsto u_i$ for all $i=1,\ldots, m$. Then $K[B] \cong R/I_B$ where $I_B = \ker(\phi)$.

A monomial order $>$ on $R$ is called a \emph{sorting order} if $T_iT_j>T_kT_{\ell}$ when $\sort(u_i,u_j)=(u_k,u_{\ell})$. 

\begin{Theorem}[{\cite[Theorem 14.2]{Sturmfels}}]\label{thm:Sturmfels}
\label{thm:order}
Let $B=\{u_1,\ldots, u_m\}$ be a sortable set of monomials and let
\begin{equation}\label{eq:sort_binomial}
G = \left\{\underline{T_{i}T_{j}}-T_{k}T_{\ell}: \{u_i, u_j\} \text{ not sorted}, (u_k, u_{\ell}) = \sort(u_i, u_j)\right\} \subset R.
\end{equation}
Then a sorting order on $R$ exists, and $G$ is a reduced Gröbner basis of $I_B$ w.\,r.\,t.\ any sorting order.
\end{Theorem}

\begin{Remark}
In \cite{Sturmfels} Sturmfels constructs a monomial order and proves {\cite[Theorem 14.2]{Sturmfels}} with respect to this particular order. However, the proof only relies on the fact that the monomial order is a sorting order as defined above. We have phrased Theorem \ref{thm:Sturmfels} differently from \cite{Sturmfels}, in order to emphasise that there might exist several sorting orders and the result holds for any of them.
\end{Remark}

A sorting order can often be realised as a degree reverse lexicographic order, for short DegRevLex. As this order will be employed later, we recall its definition here. Ordering the variables of the polynomial ring $R=K[T_1, \ldots, T_m]$ as $T_1> \dots > T_m$ we have 
\[
T_1^{\alpha_1} \cdots T_m^{\alpha_m}<T_1^{\beta_1} \cdots T_m^{\beta_m} \quad \text{according to DegRevLex}
\]
if $\alpha_1 + \dots + \alpha_m< \beta_1 + \dots + \beta_m$ or $\alpha_1 + \dots + \alpha_m= \beta_1 + \dots + \beta_m$ and there is an $i$ such that $\alpha_i>\beta_i$ and $\alpha_j=\beta_j$ for any $j=i+1,\ldots, m$.  Note that reordering the variables, and reordering the comparison of exponents accordingly, gives rise to another DegRevLex order. In total, there are $m!$ possible DegRevLex orders on $R$.

\section{The chain algebra of a finite distributive lattice}
\label{sec:chain}

\subsection{Definition and connection to stable set polytopes}

Let $L$ be a finite distributive lattice (FDL for short) of rank $r$. We label the elements of $L$ by $t_1, \ldots, t_s$ so that $t_1$ is the unique minimal element, and $t_{s}$ is the unique maximal element. We write $t_i\preceq t_j$ to indicate the inclusion of the respective ideals.
Recall that maximal chains $C=\{t_1=t_{i_0}\precdot t_{i_1} \precdot \ldots \precdot t_{i_{r}}=t_{s}\}$ of $L$ always have cardinality $r+1$, start at $t_1$, which is the unique element of rank $0$, and end at $t_s$, which is the unique element of rank $r$.

To the maximal chain $C$ we associate the monomial  $t_{C}:=t_{i_0}\!\cdots t_{i_{r}}$ in the polynomial ring $K[t_1,\ldots, t_{s}]$ over some field $K$.

\begin{Definition}
The \textit{chain algebra} of an FDL $L$ is the $K$-subalgebra of $K[t_1,\ldots, t_s]$ generated by all monomials $t_C$, where $C$ is a maximal chain of $L$. This algebra is denoted by $K[{\C}_L]$. 
\end{Definition}
Clearly, the properties of $K[{\C}_L]$  are independent of the labelling of $L$, and we may choose a labelling which fits our purpose. 


\begin{Example}
\label{work example}
Let $P$ and $L=\mathcal{J}(P)$ be as described in \Cref{chain algebra example}. Here $r=4$ and $s=8$. Then 
$$K[\mathcal{C}_L]=K[t_1t_2t_4t_6t_8,\, t_1t_2t_4t_7t_8, \, t_1t_3t_4t_6t_8, \, t_1t_3t_4t_7t_8, \, t_1t_3t_5t_7t_8]\subset K[t_1,\ldots, t_8].$$
One can verify with the help of computer algebra that
$$K[\mathcal{C}_L]\iso K[T_1,T_2,T_3,T_4,T_5]/(T_2T_3-T_1T_4).$$

\begin{figure}
\centering
\begin{tikzpicture}[scale=1.4]

\draw [thick]    (-1,0) -- (-1,1);
\draw [thick]    (-1,1) -- (0,0);
\draw [thick]     (0,0) -- (0,1);

\draw [thick]    (3.5,-1) -- (3.5+cos 45,-1+sin 45) -- (3.5+2*cos 45,-1+2*sin 45)--(3.5+cos 45,-1+3*sin 45)-- (3.5,-1+4*sin 45)--(3.5-cos 45,-1+3*sin 45)--(3.5,-1+2*sin 45)--(3.5+cos 45,-1+3*sin 45)--(3.5-cos 45,-1+sin 45)--(3.5,-1) -- (3.5+cos 45,-1+sin 45)--(3.5,-1+2*sin 45);

\node [below,left, thin] at (-1,0) {$a$};
\node [above,left, thin] at (-1,1) {$c$};
\node [below,right, thin] at (0,0) {$b$};
\node [below,right, thin] at (0,1) {$d$};

\node [below, thin] at (3.5,-1) {$t_1=\emptyset$};
\node [below, right,thin] at (3.5+cos 45,-1+sin 45) {$t_3=\{b\}$};
\node [right,thin] at (3.5+2*cos 45,-1+2*sin 45) {$t_5=\{b,d\}$};
\node [above,right,thin] at (3.5+cos 45,-1+3*sin 45) {$t_7=\{a,b,d\}$};
\node [above,thin] at (3.5,-1+4*sin 45) {$t_8=\{a,b,c,d\}$};
\node [left,thin] at (3.5-cos 45,-1+3*sin 45) {$t_6=\{a,b,c\}$};
\node [left,thin] at (3.5,-1+2*sin 45) {$t_4=\{a,b\}$};
\node [left,thin] at (3.5-cos 45,-1+sin 45) {$t_2=\{a\}$};

\fill[fill=black] (3.5,-1) circle (0.03 cm);
\fill[fill=black] (3.5+cos 45,-1+sin 45) circle (0.03 cm);
\fill[fill=black] (3.5+2*cos 45,-1+2*sin 45) circle (0.03 cm);
\fill[fill=black] (3.5+cos 45,-1+3*sin 45) circle (0.03 cm);
\fill[fill=black] (3.5,-1+4*sin 45) circle (0.03 cm);
\fill[fill=black] (3.5,-1+2*sin 45) circle (0.03 cm);
\fill[fill=black] (3.5-cos 45,-1+3*sin 45) circle (0.03 cm);
\fill[fill=black] (3.5-cos 45,-1+sin 45) circle (0.03 cm);

\fill[fill=black] (0,0) circle (0.03 cm);
\fill[fill=black] (-1,0) circle (0.03 cm);
\fill[fill=black] (-1,1) circle (0.03 cm);
\fill[fill=black] (0,1) circle (0.03 cm);

\end{tikzpicture}
\caption{A poset $P$ and the lattice $\J(P)$}
\label{chain algebra example}
\end{figure}
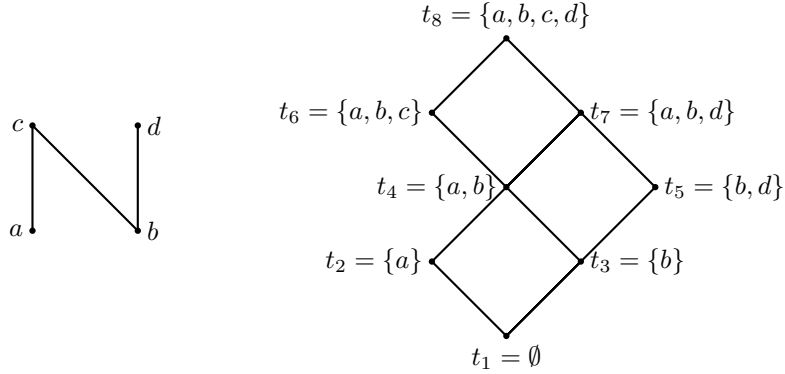
\end{Example}


We will now proceed to explaining the connection of chain algebras to stable set polytopes, which will yield some properties of chain algebras. To this end, we are interested in a specific class of polytopes, namely the so-called \emph{stable set polytopes} of finite simple graphs.

Let $G=(V,E)$ be a graph with $V=[s]=\{1,\ldots,s\}$. To a subset
$W \subseteq [s]$, we associate the 
$(0,1)$-point $\rho(W) = \sum_{j\in W}
 {e_j}\in \RR^s$. Here, $e_i$
is the $i$th standard basis vector of $\RR^s$.  Let $S(G)$ denote the set of all stable sets of $G$. The \textit{stable set polytope} $Q_{G}\subset \RR^s$ of a simple graph $G$ is the convex hull of $\{\rho(W) : W\in S(G)\}$.
Note that $Q_{G}$ is an $s$-dimensional polytope since the empty set and each singleton of $[s]$ are stable sets. Also note that  $Q_{G}$ is a $(0,1)$-polytope (the convex hull of $(0,1)$-points). This implies that all $\rho(W)$ are vertices of $Q_{G}$ and that $Q_{G}$ cannot contain other points in $\ZZ^s$ apart from its vertices. It is also known (see \cite{OH_polytope}) that if $G$ is a perfect graph, then $Q_G$ is compressed.

In order to connect stable set polytopes to chain algebras, let $G_L$ be the co-comparability graph of a lattice $L=\J(P)$.  Then (maximal) independent sets of $G_L$ correspond to (maximal) chains of $L$.
Let $F_{G_L}$ be the convex hull of $\{\rho(W)\mid W \text { is a maximal independent set of } G_L\}$. Then $F_{G_L}$ is a face of the stable set polytope $Q_{G_L}$. Indeed, the inequality $x_1+x_2+\dots+x_{s}\le r+1$, where $r$ is the rank of $P$, is valid for the whole polytope $Q_{G_L}$. Since equality is attained precisely at the points of $F_{G_L}$, the claim follows.


Note that $F_{G_L}$, being a $(0,1)$-polytope itself, does not contain any other points in $\ZZ^s$ apart from those coming from maximal chains of $L$. Therefore, we conclude the following:

\begin{Proposition}\label{prop:normal_CM}
 Let $L$ be an FDL, and $F_{G_L}$ the polytope defined above. Then $K[\C_L] \cong K[F_{G_L}]$. Moreover, the initial ideal of $I_{F_{G_L}}$ w.\,r.\,t.\ any DegRevLex order is squarefree, and as a consequence $K[\C_L]$ is normal and Cohen-Macaulay.   
\end{Proposition}

\begin{proof}
The claimed isomorphism follows from the discussion above. For the rest, consider the following chain of implications: 

$G_L$ is a co-comparability graph (of $L$) $\xRightarrow {\text{\cite{Dil}}}$ $G_L$ is perfect $\xRightarrow{\text{\cite{OH_polytope}}}$ $Q_{G_L}$ is a compressed polytope $\xRightarrow{\text{}}$ $F_L$ is a compressed polytope $\xRightarrow{\text{\cite{Sturmfels}}}$  the initial ideal of $I_{F_L}$ is generated
by squarefree monomials with respect to \emph{any} DegRevLex $\xRightarrow{\text{\cite{hochster},\cite{Sturmfels}}}$ the corresponding toric ring $K[F_L]\iso K[\C_L]$ is normal and Cohen-Macaulay. 
\end{proof}

We will see in \Cref{cor:upper_deg_bound} that having a squarefree initial ideal with respect to any DegRevLex order gives an upper bound for the degree of a minimal generator of $I_{F_L}$.

\subsection{Krull dimension and bounds on degrees of generators}

The next result concerns the Krull dimension of a chain algebra. 
\begin{Theorem}
\label{thm:dim}
Let $P$ be a poset and let $L=\mathcal{J}(P)$. Then $\dim K[\mathcal{C}_L]=|L|-|P|$.
\end{Theorem}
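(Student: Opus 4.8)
The plan is to reinterpret the statement as a rank computation for the exponent vectors of the generating monomials, and then to carry out that computation by exhibiting enough maximal chains. Since $K[\C_L]$ is generated by monomials, its Krull dimension equals the transcendence degree of its fraction field over $K$, which equals the rank of the subgroup of $\ZZ^{s}$ generated by the exponent vectors of the generators $t_C$; equivalently, it is the dimension of the $\mathbb{Q}$-linear span of these vectors. For a maximal chain $C$ write $a_C\in\mathbb{Q}^{s}$ for the exponent vector of $t_C$, i.e.\ the $0/1$ indicator vector of the set of intermediate elements of $C$. I group the coordinates according to rank: for $1\le k\le r$ set $L_k=\{a\in L:\rank(a)=k\}$, so that $\{t_1,\dots,t_s\}$ is the disjoint union $\bigsqcup_{k=1}^r L_k$ and $s=\sum_{k=1}^r|L_k|$. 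Because $L$ is pure, every maximal chain meets each rank level exactly once, so $a_C$ has exactly one nonzero coordinate inside each block $L_k$.

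For the upper bound, each $a_C$ satisfies $\sum_{a\in L_k}(a_C)_a=1$ for every $k$. The $r$ functionals $x\mapsto\sum_{a\in L_k}x_a$ have pairwise disjoint supports and each block is nonempty, so they are linearly independent; hence the affine span of $\{a_C\}$ lies in an affine subspace of dimension $s-r$ that avoids the origin, and the linear span has dimension at most $s-r+1=|L|-|P|$. For the reverse inequality it suffices to show that the differences $a_C-a_{C'}$ span the space $W=\{x:\sum_{a\in L_k}x_a=0\text{ for all }k\}=\bigoplus_{k=1}^r W_k$, which has dimension $s-r$: indeed, combining $W\subseteq\operatorname{span}\{a_C-a_{C'}\}$ with any single $a_{C_0}\notin W$ forces the linear span to have dimension at least $(s-r)+1$. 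Since each $W_k$ is spanned by the vectors $e_u-e_{u'}$ with $u,u'\in L_k$, I reduce to producing every such $e_u-e_{u'}$ as a difference of exponent vectors of maximal chains.

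The key mechanism is an elementary swap. Suppose $u,u'\in L_k$ differ by a single swap, meaning (under the identification of $L_k$ with the $k$-element order ideals of $P$) that $u=J\cup\{p\}$ and $u'=J\cup\{q\}$ with $|J|=k-1$; then $u\wedge u'=J$ and $u\vee u'=J\cup\{p,q\}$ have ranks $k-1$ and $k+1$, so $[u\wedge u',u\vee u']$ is a diamond with middle elements $u,u'$. Using purity of $\J(P)$ to extend $u\wedge u'$ downwards to $t_0$ and $u\vee u'$ upwards to $t_{s+1}$ by saturated chains, and branching through $u$ or $u'$ at rank $k$, I obtain two maximal chains $C,C'$ agreeing at every rank except $k$, so that $a_C-a_{C'}=e_u-e_{u'}$. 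Hence it suffices to show that the graph on the $k$-element order ideals of $P$, with an edge for each single swap, is connected; this is the combinatorial heart of the argument and the step I expect to require the most care.

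To prove connectivity I would argue by induction on $|I\triangle I'|$ for distinct $k$-element ideals $I,I'$. Choosing $q$ minimal in $I'\setminus I$ forces every element below $q$ into $I'\cap I\subseteq I$, so $I\cup\{q\}$ is again an ideal; choosing $p$ maximal in $I\setminus I'$ then makes $p$ maximal in $I\cup\{q\}$ (one checks $q\not>p$, as otherwise $p\in I'$), so $I''=(I\setminus\{p\})\cup\{q\}$ is a $k$-element ideal obtained from $I$ by one swap with $|I''\triangle I'|=|I\triangle I'|-2$. Iterating connects $I$ to $I'$. Connectivity then yields every $e_u-e_{u'}$ with $u,u'\in L_k$, hence each $W_k$ and hence all of $W$ lies in the span of the differences $a_C-a_{C'}$. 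Therefore the linear span of $\{a_C\}$ has dimension at least $(s-r)+1$, matching the upper bound, and $\dim K[\C_L]=s-r+1=|L|-|P|$.
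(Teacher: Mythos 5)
Your proposal is correct, and it passes through the same two landmarks as the paper's proof---the reduction of Krull dimension to the dimension of the $\mathbb{Q}$-span of the exponent vectors, and a connectivity statement saying that any two elements of the same rank are linked by elementary ``chain swaps''---but the route through them is genuinely different. The paper builds a graph $G(L)$ on all of $L$ whose edges are exactly your swaps, proves connectivity of each rank level by a nested induction (on the rank, then on the length of a shortest path one level down) using joins, and then extracts both bounds from the oriented incidence matrix of $G(L)$: its rank is $|L|-|P|-1$ by a cited graph-theoretic formula, its column space $U$ lies in the span $V$ because each edge vector is by construction a difference of two chain vectors, and the reverse inclusion $V\subseteq\langle U, v_C\rangle$ is yet another induction, on the number of nonzero entries of $v_D-v_C$. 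You replace both of these steps. For the upper bound you observe that every exponent vector lies on the $r$ affine hyperplanes $\sum_{a\in L_k}x_a=1$ with pairwise disjoint supports, so the linear span has dimension at most $s-r+1=|L|-|P|$; this is more elementary than the incidence-matrix computation and needs no connectivity at all. For the lower bound you prove connectivity of the rank-$k$ swap graph by passing through Birkhoff's theorem and inducting on $|I\triangle I'|$, taking $q$ minimal in $I'\setminus I$ and $p$ maximal in $I\setminus I'$ (your parenthetical checks that $I\cup\{q\}$ is an ideal and $p$ remains maximal in it are exactly the points that need care, and they are right). What the paper's route buys is the explicit graph $G(L)$ and its component structure, an object of some independent combinatorial interest; what your route buys is a shorter, self-contained argument in which all the combinatorics is quarantined in one cleanly stated connectivity lemma, with the rest reduced to transparent linear algebra.
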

\begin{proof}
Let $G(L)$ be the graph constructed as follows: the vertices of $G(L)$ are the elements of $L$; $t_i$ and $t_j$ are adjacent if and only if there exist maximal chains $A$ and $B$ in $L$ such that, set theoretically, $A\backslash B=\{t_i\}$, $B\backslash A=\{t_j\}$. Note that this is equivalent to $\rank(t_i)=\rank(t_j)=k$ and $\rank (t_i\vee t_j)=k+1$ (and/or, dually, $\rank (t_i\wedge t_j)=k-1$). 
The proof is now divided into three steps, the idea being as follows. In the first step we compute the number of connected components of $G(L)$. In the second step we introduce an oriented incidence matrix $B(G(L))$, and compute its rank using the first step. By \cite[Proposition~3.1]{binomialideals} the Krull dimension of our chain algebra equals the dimension of the $\QQ$-vector space $V=\langle\{\rho(C)\mid C \text{ is a maximal chain of }L\}\rangle$. The third step is then to compute $\dim V$, using the matrix $B(G(L))$ as a tool. 

Step 1: By construction, $G(L)$ has at least as many connected components as the number of different values the rank function takes, which is $|P|+1$. We shall prove that $t_i$ and $t_j$ belong to the same connected component of $G(L)$ if $\rank(t_i)=\rank(t_j)$, giving $G(L)$ exactly $|P|+1$ components. The proof is by induction on the rank. The base case $k=0$ is clear since we only have one element of rank $0$. Now assume the statement holds for all ranks up to $k$, and let $\rank{t_i}=\rank{t_j}=k+1$. Let $t_{i'}$ be some element covered by $t_i$ and let $t_{j'}$ be some element covered by $t_j$. Note that such elements will always exist and have rank $k$. By the induction hypothesis, $t_{i'}$ and $t_{j'}$ belong to the same connected component of $G(L)$. In other words, there is a path $t_{i'}t_{a_1}\ldots t_{a_p}t_{j'}$ in $G(L)$. Assume that this is the shortest possible path between $t_{i'}$ and $t_{j'}$. We proceed by induction on the length of this path. \Cref{rank} illustrates the idea. The statement clearly holds in the base case: if our path has length $0$, that is to say, if $i'=j'$, then $t_{i'}=t_{j'}$ is covered by both $t_i$ and $t_j$ therefore, $t_i$ and $t_j$ are either equal or adjacent in $G(L)$. For the induction step, let $t_{b_1}:=t_{i'}\vee t_{a_1}$. Since $t_{i'}$ and $t_{a_1}$ are adjacent in $G(L)$, we get $\rank (t_{b_1})=k+1$, and thus $t_{b_1}$ covers both $t_{i'}$ and $t_{a_1}$. Moreover, $t_i$ and $t_{b_1}$ either coincide or are adjacent in $G(L)$. Indeed, they both cover $t_{i'}$, which means $t_{i'}=t_i\wedge t_{b_1}$, unless $i=b_1$. Now that we have shown that $t_i$ is either adjacent or equal to $t_{b_1}$, it is enough to prove that $t_{b_1}$ is in the same connected component as $t_j$. We already know that $t_{b_1}$ covers $t_{a_1}$ and $t_j$ covers $t_{j'}$, and that $t_{a_1}\ldots t_{j'}$ is the shortest possible path between $t_{a_1}$ and $t_{j'}$, thus the statement holds by induction.

\begin{figure}
\centering
\begin{tikzpicture}[scale=1.3]

\node [above, thin] at (-1.5,0) {rank $k+1$};
\node [below, thin] at (-1.5,-1) {rank $k$};
\node [above, thin] at (1,0) {$t_i$};
\node [below, thin] at (1.5,-1) {$t_{i'}$};
\node [above, thin] at (2,0) {$t_{b_1}$};
\node [below, thin] at (2.5,-1) {$t_{a_1}$};
\node [above, thin] at (3,0) {$\ldots$};
\node [below, thin] at (3.5,-1) {$\ldots$};
\node [above, thin] at (4,0) {$t_{b_p}$};
\node [below, thin] at (4.5,-1) {$t_{a_p}$};
\node [above, thin] at (5,0) {$t_{b_{p+1}}$};
\node [below, thin] at (5.5,-1) {$t_{j'}$};
\node [above, thin] at (6,0) {$t_j$};

\draw[thick] (1,0)--(1.5,-1)--(2,0)--(2.5,-1);
\draw[dashed] (2.5,-1)--(3,0)--(3.5,-1)--(4,0);
\draw[thick] (4,0)--(4.5,-1)--(5,0)--(5.5,-1)--(6,0);

\end{tikzpicture}
\caption{Illustration of step 1 in the proof of \Cref{thm:dim}}
\label{rank}
\end{figure}
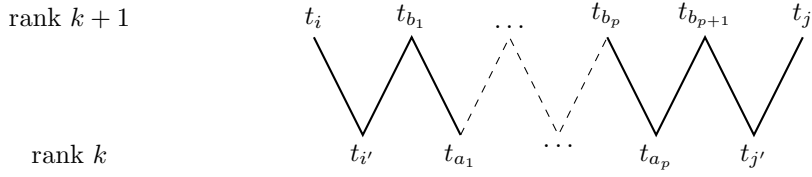

Step 2: An oriented incidence matrix $B(G(L))$ is constructed as follows. We give some direction to the edges of $G(L)$; the columns of $B(G(L))$ are indexed by edges, and the rows are indexed by vertices. The column of $t_i \to t_j $ gets a $-1$ in row $i$, a $1$ in row $j$, and $0$ in all other rows.
By \cite[Proposition 4.3]{Biggs} the rank of the incidence matrix of a directed graph equals the number of vertices minus the number of connected components of the graph. Therefore, by Step~1 we get $\rank B(G(L))=|L|-|P|-1$. We let $U$ denote the column space of $B(G(L))$. Note that the entries of a vector in $U$ always sum up to $0$. 

Step 3: Let $C$ be any maximal chain in $L$. The vector $\rho(C)$ does not belong to $U$ since its entries are non-negative, not all zero, and hence do not sum up to $0$. Let $V'=\langle U,\rho(C)\rangle$. Then $\dim V'=\dim U+1=|L|-|P|$. We shall now prove that $V'=V$, where $V=\langle\{\rho(C)\mid C \text{ is a maximal chain of }L\}\rangle$.
Let us start with the inclusion $V'\subseteq V$. Recall that $U$ is by definition spanned by columns of $B(G(L))$, and each column corresponds to an edge connecting a pair of vertices $t_i$ and $t_j$ such that $A\setminus B=\{t_i\}$, $B\setminus A=\{t_j\}$ for some chains $A$ and $B$. Then the column vector corresponding to the edge $t_i \to t_j$ equals $\rho(B)-\rho(A)$.
Therefore, $U\subseteq V$, and as $\rho(C)\in V$ we have $V'\subseteq V$.

We move on to proving the inclusion $V \subseteq V'$. It is sufficient to show that $\rho(D)\in V'$ for any maximal chain $D$ in $L$. In fact, it is sufficient to show that for any such $D$ we have $\rho(D)-\rho(C)\in U$. Note that the entries of $\rho(D)-\rho(C)$ belong to the set $\{0,1,-1\}$ and sum up to $0$. If $\rho(D)-\rho(C)=0$, we are done. Otherwise, there exists an index $i$ such that the $i$th entry of $\rho(D)-\rho(C)$ is $1$. Then $t_i$ is an element of $D$, but not $C$. Let $k=\rank (t_i)$. Since every chain contains a unique element of each rank, there exists $t_j\in C$, $i\not=j$, such that $\rank(t_j)=k$. Then clearly $t_j\not\in D$ since $D$ already possesses an element of rank $k$, namely, $t_i$. In other words, the $i$th and the $j$th entries of $\rho(D)-\rho(C)$ are $1$ and $-1$, respectively. By Step~1 we know that $t_i$ and $t_j$ (having the same rank) belong to the same connected component of $G(L)$ and therefore $e_i-e_j\in U$. It is enough to show $(\rho(D)-\rho(C))-(e_i-e_j)\in U$, and we proceed by induction on the number of entries $1$ of $\rho(D)-\rho(C)$.

We have now proved that $V'=V$, and we can conclude that $\dim K[\C_L]= \dim V = \dim V' = |L|-|P|$.
\end{proof}

Next, we discuss the degrees of minimal generators of the toric ideal of a chain algebra.

Given an FDL $L$ and a positive integer $a$, consider the bipartite graph on the rank $a$ and rank $a+1$ elements if $L$. The edges of this graph are pairs $I \precdot I'$, where $I$ has rank $a$.

\begin{Theorem}\label{thm:cycle}
Let $L$ be an FDL, such that the bipartite graph on the rank $a$ and $a+1$ elements has an induced cycle of length $2s$, for some positive integer $a$ and $s>2$. Then the defining ideal of the chain algebra $K[\C_L]$ has a minimal generator of degree $s$. 
\end{Theorem}
\begin{proof}
Let $t_1, \ldots, t_s$ and $t_{1,2}, t_{2,3}, \ldots, t_{s-1,s}, t_{s,1}$ be labels of the elements of rank $a$ and $a+1$ in $L$ that constitutes an induced cycle $\mathscr{C}$
\begin{equation}\label{eq:induced_cycle}
t_1 \precdot t_{1,2}  \succdot t_2 \precdot t_{2,3} \succdot t_3 \precdot \   \cdots \  \succdot t_s \precdot t_{s,1} \succdot t_1
\end{equation}
of length $2s$. Take $s$ maximal chains $C_1, \ldots, C_s$ in $L$ such that $C_i=\{c_1^{(i)}\precdot \dots \precdot c_r^{(i)} \}$ with $c_a^{(i)}=t_i$ and $c_{a+1}^{(i)}=t_{i,i+1}$ where $t_{s,s+1}$ should be understood as $t_{s,1}$.
In addition, take another $s$ maximal chains $\widehat{C}_1, \ldots, \widehat{C}_s$ in $L$ such that 
\[
\widehat{C}_i= \{c_1^{(i)} \precdot \dots \precdot c_a^{(i)} \precdot c_{a+1}^{(i-1)} \precdot \dots \precdot c_{r}^{(i-1)} \}
\]
where $c_j^{(0)}$ should be read as $c_j^{(s)}$. 
Note that there are precisely two perfect matchings of the cycle $\mathscr{C}$, and those are given by $\{c_a^{(i)}\precdot c_{a+1}^{(i)}\}_{i=1, \ldots, s}$ and $\{c_a^{(i)} \precdot c_{a+1}^{(i-1)}\}_{i=1, \ldots, s}$. The two matchings have no edge in common. 

The monomial identity $t_{C_1}\! \cdots t_{C_s} = t_{\widehat{C}_1}\! \cdots t_{\widehat{C}_s}$ gives rise to a binomial of degree $s$ in the defining ideal of $K[\C_L]$. If this binomial is generated by binomials of lower degrees, then by \cite[Lemma~3.8]{binomialideals}  there is a sequence of $s$-tuples of chains 
\[
\begin{pmatrix} C_1\\ \vdots \\ C_s \end{pmatrix} = \begin{pmatrix} E_1^{(0)} \\ \vdots \\ E_s^{(0)} \end{pmatrix} \rightsquigarrow \begin{pmatrix} E_1^{(1)} \\ \vdots \\ E_s^{(1)} \end{pmatrix} \rightsquigarrow \dots \rightsquigarrow \begin{pmatrix} E_1^{(j)} \\ \vdots \\ E_s^{(j)} \end{pmatrix} =   \begin{pmatrix} \widehat{C}_1\\ \vdots \\ \widehat{C}_s \end{pmatrix}
\]
all defining identical monomials $t_{E_1^{(i)}} \cdots t_{E_s^{(i)}}$, and at least one chain fixed in each step. For each of the $s$-tuples, the rank $a$ and $a+1$ pieces must be a perfect matching of the induced subgraph with vertex set $\{t_1, \ldots, t_s, t_{1,2}, \ldots, t_{s-1,s},t_{s,1}\}$, which is the cycle $\mathscr{C}$. As $(C_1, \ldots, C_s)$ and $(\widehat{C}_1, \ldots, \widehat{C}_s)$ correspond to the two possible matchings of $\mathscr{C}$ there must be a step $(E_1^{(i)}, \ldots, E_s^{(i)}) \rightsquigarrow (E_1^{(i+1)}, \ldots, E_s^{(i+1)})$ where we switch from one matching to the other. But this contradicts the fact that for every $i$ there exists $k_i$ such that $E_{k_i}^{(i)}=E_{k_i}^{(i+1)}$. We conclude that the defining ideal of $K[\C_L]$ contains a binomial of degree $s$ which is not generated by the binomials of lower degrees. 
\end{proof}

\begin{Corollary}
    
\label{thm:deg_n_rel}
 Let $P$ be a poset of width $n>2$, and let $L=\J(P)$. Then the defining ideal of the chain algebra $K[\C_L]$ has a minimal generator of degree $n$. 
\end{Corollary}
\begin{proof}
    As $P$ has width $n>2$ there is an antichain $p_1, \ldots, p_n$. Consider the ideal  $I=(p_1,\ldots, p_n)\setminus \{p_1,\ldots, p_n\}$, and let $t_i$ be the label of the ideal $I \cup \{p_i\}$, for $i=1, \ldots, n$. We note the ideals $I \cup \{p_i\}$ all have the same rank, say $a$. Let $t_{i,j}$ be the label of the rank $a+1$ ideal $I \cup \{p_i, p_j\}$. Then \eqref{eq:induced_cycle} is an induced cycle of length $2n$, so by \Cref{thm:cycle} we have a minimal generator of degree $n$.   
\end{proof}

\begin{Example}\label{ex:Bn}
If $P$ is an antichain of $n$ elements, then $\J(P)$ is the Boolean lattice $\B_n$. For $n=3,4$ the toric ideal defining the chain algebra of $\B_n$ is generated by binomials of degrees up to $n$, but when $n=5$ there is a minimal generator of degree six. To see this, let us identify the elements of $\B_5$ with subsets of $\{1,2,3,4,5\}$. An induced cycle of length 12 is given in \Cref{fig:induced_cycle}. By \Cref{thm:cycle}, with $a=2$, the defining ideal of the chain algebra has a minimal generator of degree six.
\begin{figure}

\begin{tikzpicture}
\coordinate (12) at (0,0);
\coordinate (13) at (2,0);
\coordinate (14) at (4,0);
\coordinate (45) at (6,0);
\coordinate (35) at (8,0);
\coordinate (25) at (10,0);
\coordinate (123) at (0,1);
\coordinate (134) at (2,1);
\coordinate (145) at (4,1);
\coordinate (345) at (6,1);
\coordinate (235) at (8,1);
\coordinate (125) at (10,1);

\draw [thick]    (12) -- (123)--(13)--(134)--(14)--(145)--(45)--(345)--(35)--(235)--(25)--(125)--(12);

\node [below] at (12) {$\{1,2\}$};
\node [below] at (13) {$\{1,3\}$};
\node [below] at (14) {$\{1,4\}$};
\node [below] at (45) {$\{4,5\}$};
\node [below] at (35) {$\{3,5\}$};
\node [below] at (25) {$\{2,5\}$};
\node [above] at (123) {$\{1,2,3\}$};
\node [above] at (134) {$\{1,3,4\}$};
\node [above] at (145) {$\{1,4,5\}$};
\node [above] at (345) {$\{3,4,5\}$};
\node [above] at (235) {$\{2,3,5\}$};
\node [above] at (125) {$\{1,2,5\}$};

\fill[fill=black] (12) circle (0.05 cm);
\fill[fill=black] (13) circle (0.05 cm);
\fill[fill=black] (14) circle (0.05 cm);
\fill[fill=black] (45) circle (0.05 cm);
\fill[fill=black] (35) circle (0.05 cm);
\fill[fill=black] (25) circle (0.05 cm);

\fill[fill=black] (123) circle (0.05 cm);
\fill[fill=black] (134) circle (0.05 cm);
\fill[fill=black] (145) circle (0.05 cm);
\fill[fill=black] (345) circle (0.05 cm);
\fill[fill=black] (235) circle (0.05 cm);
\fill[fill=black] (125) circle (0.05 cm);
\end{tikzpicture} 
\caption{An induced cycle of length $12$ in the Boolean lattice $\B_5$.}
\label{fig:induced_cycle}
\end{figure}
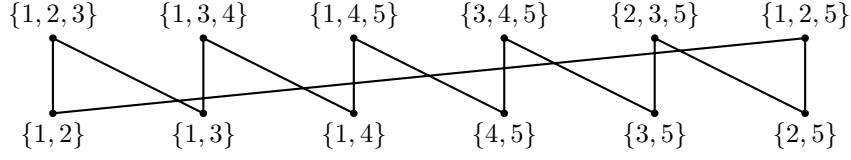
\end{Example}

We have just seen that the width of the poset $P$ does not give an upper bound for the degrees of minimal generators of the toric ideal defining the chain algebra. However, we do get a bound in terms of the number of chains, applying the following proposition. 

\begin{Proposition}\label{prop:upper_deg_bound}
Let $I \subset K[x_1, \ldots, x_n]$ be a homogeneous binomial prime ideal whose initial ideal is squarefree w.\,r.\,t.\ any DegRevLex order. Then any minimal generator of $I$ has degree at most $n/2$. 
\end{Proposition}
\begin{proof}
    For a monomial $m \in K[x_1, \ldots, x_n]$, let $\supp(m)$ denote its support, that is the set of variables which divide $m$. 

    Let $f_1=x_1^{\alpha_1} \cdots x_n^{\alpha_n}-c_1x_1^{\beta_1}\cdots x_n^{\beta_n}$ be a minimal generator of $I$. As $I$ is prime the two terms of $f_1$ have no variable in common. Consider a DegRevLex order $<$ where the variables in $\supp(x_1^{\alpha_1} \cdots x_n^{\alpha_n})$ are greater than any other variable, and the variables in $\supp(x_1^{\beta_1} \cdots x_n^{\beta_n})$ are smaller than any other variable. Then the leading term of $f$ is $x_1^{\alpha_1} \cdots x_n^{\alpha_n}$. If this term is not squarefree, there is a binomial $g=\underline{x_1^{\gamma_1} \cdots x_n^{\gamma_n}}-c_2x_1^{\delta_1} \cdots x_n^{\delta_n}$ in the Gröbner basis of $I$ w.\,r.\,t.\ $<$ with squarefree leading term dividing the leading term of $f$. We reduce $f$ modulo $g$ and get a new binomial
    \[
f_2=    f_1-mg= c_2mx_1^{\delta_1} \cdots x_n^{\delta_n}-c_1x_1^{\beta_1} \cdots x_n^{\beta_n}
    \]
     where $m=x_1^{\alpha_1-\gamma_1} \cdots x_n^{\alpha_n-\gamma_n}$. We have $\supp(x_1^{\delta_1} \cdots x_n^{\delta_n}) \cap \supp (x_1^{\beta_1} \cdots x_n^{\beta_n})= \emptyset$, as otherwise $f_1$ would be generated by binomials in $I$ of lower degree, contradicting the assumption that $f_1$ is a minimal generator. By definition of the monomial order $<$ the leading term of $f_2$ is   $c_2mx_1^{\delta_1} \cdots x_n^{\delta_n}$, which is smaller than the leading term of $f_1$. If the leading term of $f_2$ is not squarefree we continue and reduce $f_2$ in the same way. After a finite number of reductions we must arrive at a binomial $f_3=m'-c_1x_1^{\beta_1} \cdots x_n^{\beta_n}$ in $I$ with the same degree as $f_1$, whose leading term $m'$ is squarefree. Next, let's choose another DegRevLex order $<'$ where the variables in $\supp(x_1^{\beta_1} \cdots x_n^{\beta_n})$ are the largest and the variables of $\supp(m')$ are the smallest. Applying the same procedure as above but w.\,r.\,t.\ $<'$ results in a binomial $f_4=m''-m'$ where $\supp(m'') \cap \supp(m')=\emptyset$ and both terms are squarefree. Then $f_4$ can have degree at most $n/2$, and as $\deg(f_1)=\deg(f_4)$ we are done. 
\end{proof}

As the defining toric ideal of a chain algebra has a squarefree inital ideal w.\,r.\,t.\ any DegRevLex order by Proposition \ref{prop:normal_CM}, we get the following. 

\begin{Corollary}\label{cor:upper_deg_bound}
Let $L$ be an FDL, and let $I$ be the defining toric ideal of the chain algebra $K[\C_L]$. Then any minimal generator of $I$ has degree at most half the number of maximal chains in $L$. 
\end{Corollary}

For the Boolean lattice $\mathcal{B}_5$ in Example \ref{ex:Bn}, Corollary \ref{cor:upper_deg_bound} gives an upper bound of $5!/2=30$ for the degrees of minimal generators of the defining ideal. It is not known whether generators of this degree are actually needed.  

\begin{Problem}
Describe a minimal generating set for the defining ideal of the chain algebra of a finite distributive lattice. What are the degrees of the generators?
\end{Problem}

\section{Chain algebras of planar distributive lattices}\label{sec:planar}

Recall that an FDL $L$ is called planar if $\dim (L)\le 2$. Planarity is characterised by several equivalent conditions, see \cite{chenkoh}. 
Informally, a lattice being planar means that its Hasse diagram can be drawn on a plane without self-crossings. 
For us, it will be useful that $L$ can be embedded into a square grid, as explained below. 
Note that if $\dim(L)$ is $0$ or $1$ the chain algebra is isomorphic to the polynomial ring in one variable.
To avoid technicalities we assume $\dim(L)=2$ throughout this section, noting that all results are trivially true in the case $\dim(L)<2$.

\subsection{Algebraic properties of planar chain algebras}\label{subsec:alg_prop_planar}
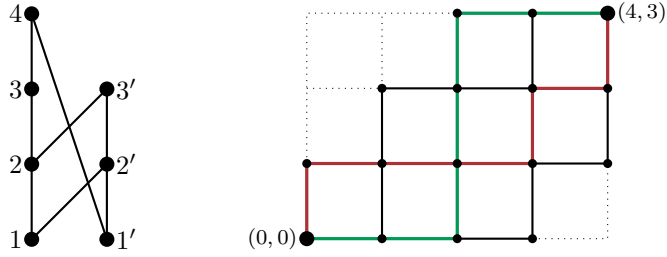
\begin{figure}
\centering

\begin{tikzpicture}

\coordinate (1) at (0,0);
\coordinate (2) at (0,1);
\coordinate (3) at (0,2);
\coordinate (4) at (0,3);
\coordinate (1') at (1,0);
\coordinate (2') at (1,1);
\coordinate (3') at (1,2);

\draw [thick]    (1) -- (2)--(3)--(4);
\draw [thick]     (1') -- (2')--(3');
\draw [thick]  (1')--(4);
\draw [thick]  (1)--(2');
\draw [thick]  (2)--(3');

\node [left] at (1) {$1$};
\node [left] at (2) {$2$};
\node [left] at (3) {$3$};
\node [right] at (1') {$1'$};
\node [right] at (2') {$2'$};
\node [right] at (3') {$3'$};
\node [left] at (4) {$4$};

\fill[fill=black] (1) circle (0.1 cm);
\fill[fill=black] (2) circle (0.1 cm);
\fill[fill=black] (3) circle (0.1 cm);
\fill[fill=black] (1') circle (0.1 cm);
\fill[fill=black] (2') circle (0.1 cm);
\fill[fill=black] (3') circle (0.1 cm);
\fill[fill=black] (4) circle (0.1 cm);

\end{tikzpicture}
\hspace{1cm}
\begin{tikzpicture}

\draw[thick] (0,0) -- (3,0);
\draw[thick] (0,1) -- (4,1);
\draw[thick] (1,2) -- (4,2);
\draw[thick] (2,3) -- (4,3);
\draw[thick] (0,0) -- (0,1);
\draw[thick] (1,0) -- (1,2);
\draw[thick] (2,0) -- (2,3);
\draw[thick] (3,0) -- (3,3);
\draw[thick] (4,1) -- (4,3);

\draw[dotted] (3,0) -- (4,0)--(4,1);
\draw[dotted] (0,1) -- (0,3)--(2,3);
\draw[dotted] (0,2) -- (1,2)--(1,3);

\draw[very thick, Maroon] (0,0)--(0,1)--(1,1)--(2,1)--(3,1)--(3,2)--(4,2)--(4,3);
\draw[very thick, ForestGreen] (0,0)--(1,0)--(2,0)--(2,1)--(2,2)--(2,3)--(3,3)--(4,3);

\fill[fill=black] (0,0) circle (0.1 cm);
\fill[fill=black] (0,1) circle (0.06 cm);
\fill[fill=black] (1,0) circle (0.06 cm);
\fill[fill=black] (1,1) circle (0.06 cm);
\fill[fill=black] (1,2) circle (0.06 cm);
\fill[fill=black] (2,0) circle (0.06 cm);
\fill[fill=black] (2,1) circle (0.06 cm);
\fill[fill=black] (2,2) circle (0.06 cm);
\fill[fill=black] (2,3) circle (0.06 cm);
\fill[fill=black] (3,0) circle (0.06 cm);
\fill[fill=black] (3,1) circle (0.06 cm);
\fill[fill=black] (3,2) circle (0.06 cm);
\fill[fill=black] (3,3) circle (0.06 cm);
\fill[fill=black] (4,1) circle (0.06 cm);
\fill[fill=black] (4,2) circle (0.06 cm);
\fill[fill=black] (4,3) circle (0.1 cm);

\node [below, left] at (0,0) {\footnotesize $(0,0)$};
\node [above, right] at (4,3) {\footnotesize $(4,3)$};

\end{tikzpicture}
\caption{A poset $P$, the lattice $L=\J(P)$ embedded into a grid, and a pair of its incomparable maximal chains}
\label{fig:Grid_example}
\end{figure}

For a planar FDL $L=\J(P)$, the underlying poset $P$ is covered by two chains $A=\{1\prec 2 \prec \dots \prec a\}$ and $B=\{1'\prec  2'\prec   \ldots \prec b'\}$. An ideal $I$ of $P$ with $|I\cap A|=i$ and $|I\cap B|=j$ is assigned the label $t_{ij}$. It is easily verified that this assignment is injective.
We now have a natural embedding of $L$ into $\NN^2$, with the convention that $0\in \NN$. The maximal chains of $L$ are depicted as paths taking steps $(i,j) \to (i+1,j)$ and $(i,j) \to (i,j+1)$, with starting point $(0,0)$ and end point $(a,b)$.

With this notation, $t_{ij} \preceq t_{k \ell}$ if and only if $i \le k$ and $j \le \ell$, and the rank of $t_{ij}$ is $i+j$. We extend this to a total order on the $t_{ij}$'s by saying that $t_{ij} <t_{k \ell}$ if $i+j < k + \ell$ or $i+j=k+ \ell$ and $i <k$.
Next, we define a partial order $\preceq_{\operatorname{ch}}$ on the set of maximal chains of $L$. 
Let $C=\{t_{00}=c_0 \precdot c_1\precdot\ldots\precdot c_r=t_{ab}\}$ and $D=\{t_{00}=d_0 \precdot d_1\precdot\ldots\precdot d_r=t_{ab}\}$, where $c_k$ and $d_k$ are some $t_{ij}$'s of rank $k$.
 We say that $C\preceq_{\operatorname{ch}} D$ if for all $k=1,\ldots, r$ we have $c_k\le d_k$. Considering $C$ and $D$ as $\NN^2$-paths, this can be visualised as $C$ lying weakly above and to the left of $D$, that is $C$ and $D$ are \textit{non-traversing}.
 
 Now,  suppose $C$ and $D$ are incomparable, and let $k<r$ be the largest rank such that the truncated chains $\{c_0 \precdot c_1\precdot\ldots\precdot c_k\}$ and $\{d_0 \precdot d_1\precdot\ldots\precdot d_k\}$ are comparable with respect to the partial order above. Then $c_k=d_k$ and this point will be called \textit{the first traversing point} of $C$ and $D$.
 
\begin{Example}
\label{ex:gridchains}
Let $P$ be the poset in \Cref{fig:Grid_example}. This figure also shows the corresponding embedding of $L=\J(P)$ into $\NN^2$. 
    An example of two incomparable maximal chains $\color{Maroon}{C}$ and $\color{ForestGreen}{D}$ of the lattice $L$ in \Cref{fig:Grid_example} is given by
    \[
    {\color{Maroon}C}=\{t_{00} \precdot t_{01}\precdot t_{11}\precdot t_{21}\precdot t_{31}\precdot t_{32}\precdot t_{42} \precdot t_{43}\}\]
    and 
    \[
    {\color{ForestGreen}D}=\{t_{00} \precdot t_{10}\precdot t_{20}\precdot t_{21}\precdot t_{22}\precdot t_{23}\precdot t_{33} \precdot t_{43}\}.
    \]
    The first traversing point of ${\color{Maroon}C}$ and ${\color{ForestGreen}D}$ is $(2,1)$. Following ${\color{Maroon}C}$ until the point $(2,1)$ and then following ${\color{ForestGreen}D}$ produces the greatest path smaller than both $\color{Maroon}{C}$ and ${\color{ForestGreen}D}$ w.\,r.\,t.\ the partial order $\prec_{\operatorname{ch}}$. In other words
    \[
    {\color{Maroon}C} \wedge {\color{ForestGreen}D}=\{ t_{00} \precdot t_{01}\precdot t_{11}\precdot t_{21}\precdot t_{22}\precdot t_{23}\precdot t_{33} \precdot t_{43} \},\]
    and similarly
    \[{\color{Maroon}C} \vee {\color{ForestGreen}D} =\{ t_{00} \precdot t_{10}\precdot t_{20}\precdot t_{21}\precdot t_{31}\precdot t_{32}\precdot t_{42} \precdot t_{43} \}.
    \]
\end{Example}

Taking the meet and join of maximal chains can also be realised as a sorting of monomials. The sorting is performed using the total order on variables $t_{ij}$ defined above, from smaller to larger. In Example \ref{ex:gridchains} we get $\sort(t_{\color{Maroon}C},t_{\color{ForestGreen}D}) = (t_{{\color{Maroon}C} \wedge {\color{ForestGreen}D}},t_{{\color{Maroon}C} \vee {\color{ForestGreen}D}})$.

\begin{Lemma}\label{lemma:planar-sort-hibi}
Let $L$ be a planar FDL whose elements are labelled as described above. The set of maximal chains of $L$ constitutes an FDL $L'$, with respect to the partial order $\preceq_{\operatorname{ch}}$. For two maximal chains $C=\{c_0\precdot\ldots\precdot c_r\}$ and $D=\{d_0\precdot\ldots\precdot d_r\}$ in $L$, their meet and join in $L'$ are given by 
\begin{align*}
&C \wedge D = \{ \min(c_0, d_0)\precdot \dots \precdot \min(c_i, d_i) \precdot \dots \precdot \min(c_r,d_r) \} \\ 
&C \vee D = \{ \max(c_0, d_0) \precdot \dots \precdot \max(c_i, d_i) \precdot \dots \precdot \max(c_r,d_r)\}. 
\end{align*}
Moreover, we have $\sort(t_C,t_D) = (t_{C \wedge D}, t_{C \vee D})$.
\end{Lemma}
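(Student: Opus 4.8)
The plan is to verify in turn that the stated formulas yield genuine maximal chains of $L$, that these chains are the meet and join for $\preceq_{\operatorname{ch}}$, that the resulting lattice $L'$ is distributive, and finally to read off the sorting identity. Two structural features drive everything. First, because the total order on labels is refined by rank (we have $t_{ij}<t_{k\ell}$ whenever $i+j<k+\ell$), the variables of a chain monomial $t_C=t_{c_1}\cdots t_{c_r}$ satisfy $c_1<c_2<\dots<c_r$, with $c_k$ the unique rank-$k$ element on $C$. Second, for two chains $C$ and $D$ there are, at each rank $k$, exactly the two elements $c_k$ and $d_k$, so that $\min(c_k,d_k)$ and $\max(c_k,d_k)$ are each literally one of $c_k,d_k$ and hence automatically lie in $L$.

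Next I would check that $C\wedge D=\{\min(c_1,d_1)\precdot\dots\precdot\min(c_r,d_r)\}$ is a maximal chain, the argument for $C\vee D$ being symmetric. Writing $e_k=\min(c_k,d_k)$, the remarks above give $e_k\in L$ with $\rank(e_k)=k$. In the grid embedding write $e_k=t_{x_k,\,k-x_k}$, where $x_k=\min(x^C_k,x^D_k)$ is the smaller of the two first coordinates at rank $k$. From $x_{k+1}-x_k\in\{0,1\}$ for $C$ and $D$, together with $\min(a+1,b+1)=\min(a,b)+1$, one gets that $x_1\le\dots\le x_r$ is monotone with unit steps, so $e_k\preceq e_{k+1}$ in $L$ with ranks differing by one; hence $e_k\precdot e_{k+1}$, and adjoining $t_0$ and $t_{s+1}$ produces a maximal chain. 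This is the step I expect to require the most care: the grid region of a planar lattice is generally a staircase rather than a full rectangle, so a coordinatewise construction could in principle leave $L$. What rescues the argument is precisely the first observation, that the coordinatewise extrema reuse existing vertices of $C$ and $D$ rather than creating new grid points, leaving only the covering (unit-step) condition to verify.

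The identification of meet and join is then immediate from the definition of $\preceq_{\operatorname{ch}}$: the chain $\{\min(c_k,d_k)\}_k$ is a lower bound of $C$ and $D$, and any chain $E$ with $e_k\le c_k$ and $e_k\le d_k$ for all $k$ satisfies $e_k\le\min(c_k,d_k)$, so it is the greatest lower bound; dually for the join. Thus the maximal chains of $L$ form a lattice $L'$. For distributivity I would send $C$ to the tuple $(c_1,\dots,c_r)$, embedding $L'$ as a subposet of the product $R_1\times\dots\times R_r$, where $R_k$ denotes the chain of rank-$k$ elements of $L$ under the total order. Since meet and join in $L'$ are computed coordinatewise, this realises $L'$ as a sublattice of a product of chains; as such a product is distributive and sublattices of distributive lattices are distributive, so is $L'$.

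Finally, the sorting identity follows again from the rank refinement. The monomial $t_Ct_D$ is the merge of the increasing sequences $c_1<\dots<c_r$ and $d_1<\dots<d_r$; sorting its $2r$ variables and grouping them by rank, where each rank $k$ contributes exactly the pair $\{c_k,d_k\}$, yields the sorted list with $j_{2k-1}=\min(c_k,d_k)$ and $j_{2k}=\max(c_k,d_k)$ for $k=1,\dots,r$. Reading off the odd-indexed factors gives $\prod_k\min(c_k,d_k)=t_{C\wedge D}$ and the even-indexed factors give $\prod_k\max(c_k,d_k)=t_{C\vee D}$, which is exactly the assertion $\sort(t_C,t_D)=(t_{C\wedge D},t_{C\vee D})$.
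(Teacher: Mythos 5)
Your proof is correct, but it takes a genuinely different route from the paper's. The paper argues by cases on the pair of chains: when $C \preceq_{\operatorname{ch}} D$ the variables interlace ($c_1\le d_1\le c_2\le\dots\le d_r$) and the pair is already sorted; when $C$ and $D$ are incomparable, it repeatedly swaps the tails of the two chains at their first traversing point, noting that each swap produces genuine maximal chains with the same multiset of variables, and the coordinatewise $\min$/$\max$ chains emerge as the terminal state of this iteration. Notably, the paper's proof does not establish distributivity of $L'$ within the lemma at all --- it defers this to the later identification of $L'$ with an interval of Young's lattice. You instead verify directly that the coordinatewise formulas define maximal chains of $L$, correctly isolating the key point (that $\min(c_k,d_k)$ and $\max(c_k,d_k)$ reuse elements of $C$ and $D$, so membership in $L$ is automatic and only the unit-step cover condition needs the grid computation); the meet/join identification is then immediate, and the sorting identity follows in one stroke from the fact that the total order on labels refines rank, so the sorted merge of $t_Ct_D$ splits into rank blocks $\min(c_k,d_k),\max(c_k,d_k)$ --- handling comparable and incomparable pairs uniformly, with no iteration. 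Your distributivity argument, embedding $L'$ as a sublattice of the product of chains $R_1\times\dots\times R_r$ with coordinatewise meet and join, is elementary and makes the lemma self-contained, which the paper's proof, strictly read, is not. What the paper's approach buys is the traversing-point picture of lattice paths, which is reused later (e.g.\ in \Cref{ex:gridchains} and in the Young-diagram identification); what yours buys is a shorter, uniform, and logically complete proof of the statement as written.
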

\begin{proof}
 Let's first consider the case $C\preceq_{\operatorname{ch}} D$. Here we have $c_k\le d_k$ for all $k=0,\ldots, r$. We also have $d_k\le c_{k+1}$ for all $k=0,\ldots, r-1$ since $d_k$ has rank $k$ and $c_{k+1}$ has rank $k+1$. We conclude that 
 \[c_0=d_0 \le c_1\le d_1\le c_2\le d_2\le\ldots\le c_r= d_r\]
 and thus the sorting process on the pair $(t_C, t_D)$ returns $(t_C,t_D)$. It is clear that $C \wedge D=C$ and $C \vee D=D$.
Now let $C$ and $D$ be incomparable with respect to $\preceq_{\operatorname{ch}}$, and let $c_k=d_k$ be the first traversing point.  Without loss of generality we can assume $c_i\le d_i$ for all $0\le i\le k$, and $c_{k+1}>d_{k+1}$. 
Now, let 
\begin{align*} &C'=\{c_0\precdot c_1\precdot\ldots\precdot c_k=d_k\precdot d_{k+1}\precdot\ldots\precdot d_r\}, \quad \text{and} \\ 
&D'=\{d_0\precdot d_1\precdot\ldots\precdot d_k=c_k\precdot c_{k+1}\precdot\ldots\precdot c_r\}.\end{align*}
In other words, $C'$ travels along $C$ until rank $k$, where $C$ and $D$ traverse, and continues along $D$, while $D'$ travels along $D$ until rank $k$ and continues along $C$. Clearly $C'$ and $D'$ are chains of $L$, and $\sort(t_C,t_D)=\sort(t_{C'},t_{D'})$. If $C' \preceq_{\operatorname{ch}} D'$, the sorting is complete. Otherwise there is a rank $k'>k$ where $C'$ and $D'$ first traverse, and we repeat the argument. Eventually we arrive at $\sort(t_C, t_D)=(t_{\widetilde C}, t_{\widetilde D})$. By construction
\begin{align*}
\widetilde{C} =& \{ \min(c_0, d_0)\precdot \dots \precdot \min(c_i, d_i) \precdot \dots \precdot \min(c_r,d_r) \} \\ 
\widetilde{D} =&  \{ \max(c_0, d_0) \precdot \dots \precdot \max(c_i, d_i) \precdot \dots \precdot \max(c_r,d_r)\}. 
\end{align*}
Moreover, $\widetilde C$ is the largest chain which is smaller than both $C$ and $D$, so $\widetilde{C}=C\wedge D$, and similarly $\widetilde{D}= C \vee D$. This proves the claim on the sorting, and that $L'$ is a lattice with the prescribed meet and join. We will see in Lemma \ref{lemma:youngs_lattice} that $L'$ can in fact be identified with an interval of Young's lattice and is therefore distributive.
\end{proof}

Finally, we would like to introduce the notion of a sortable FDL. We have seen that sortable sets of monomials define toric algebras with well understood defining ideals. We have also seen that the meet and join of maximal chains of a planar FDL can be expressed via the sorting operator. This leads to the following definition:
\begin{Definition}
Let $L=\mathcal{J}(P)$, as before. We will say that $L$ is \textit{sortable} if there exists a labelling on $L$ such that $B=\{t_C\mid C \text{ is a maximal chain of } L\}$ is a sortable set of monomials.
\end{Definition}

Applying Lemma \ref{lemma:planar-sort-hibi} we can now give a classification of the chain algebras arising from planar FDLs.

\begin{Theorem}\label{thm:mainplanar}
The following are equivalent for an FDL $L$.

\begin{enumerate}
    \item $L$ is planar,
    \item $L$ is sortable,
    \item The defining ideal of $K[\mathcal{C}_L]$ has a quadratic Gröbner basis with respect to DegRevLex,
    \item The defining ideal of $K[\mathcal{C}_L]$ is quadratically generated,
    \item $K[\C_L]$ is Koszul,
    \item $K[\C_L]$ is a Hibi ring.

\end{enumerate}
\end{Theorem}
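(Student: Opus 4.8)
The plan is to prove the cycle $(1)\Rightarrow(2)\Rightarrow(3)\Rightarrow(4)\Rightarrow(1)$ together with $(1)\Rightarrow(5)\Rightarrow(4)$, which is enough to make all five statements equivalent. The implication $(1)\Rightarrow(2)$ is immediate from \Cref{lemma:planar-sort-hibi}: for a planar $L$ with the grid labelling, that lemma gives $\sort(t_C,t_D)=(t_{C\wedge D},t_{C\vee D})$, and since $C\wedge D$ and $C\vee D$ are again maximal chains, the set $B=\{t_C\}$ satisfies $\sort(B\times B)\subseteq B\times B$, i.e.\ $L$ is sortable.

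For $(2)\Rightarrow(3)$ I would invoke \Cref{thm:order}: once $B$ is sortable, $I_B$ has a quadratic reduced Gröbner basis with respect to any sorting order, consisting of the binomials $T_CT_D-T_{C\wedge D}T_{C\vee D}$ for unsorted pairs. The only point needing attention is that DegRevLex can be realised as a sorting order. I would order the variables $T_C$ by a linear extension of $\preceq_{\operatorname{ch}}$. For an incomparable pair one has $C\wedge D\prec_{\operatorname{ch}} C,D\prec_{\operatorname{ch}} C\vee D$, so the $\preceq_{\operatorname{ch}}$-minimal of the four variables occurring in a relation is $C\wedge D$, which lies in the second monomial $T_{C\wedge D}T_{C\vee D}$; hence the last nonzero coordinate of the exponent difference is $-1$ and DegRevLex selects $T_CT_D$ as leading term. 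Thus DegRevLex is a sorting order and $(3)$ follows. The implication $(3)\Rightarrow(4)$ is trivial, since a Gröbner basis generates the ideal.

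For the Hibi direction, $(1)\Rightarrow(5)$ combines \Cref{lemma:planar-sort-hibi} with \Cref{thm:order}. The assignment $C\mapsto t_C$ is injective, since distinct chains give distinct squarefree monomials, so the maximal chains of $L$ biject with the elements of the distributive lattice $L'$. Under this bijection the sorting relations are precisely the nontrivial Hibi relations $x_Cx_D-x_{C\wedge D}x_{C\vee D}$ of $L'$ (the comparable pairs giving trivial relations), and by \Cref{thm:order} these generate $I_B$; hence $K[\mathcal{C}_L]\cong K[L']$ is a Hibi ring. For $(5)\Rightarrow(4)$ I would use that the defining ideal of a Hibi ring is generated by its quadratic relations $x_ax_b-x_{a\wedge b}x_{a\vee b}$; the only care needed is that the isomorphism matches the natural degree-one generators of the two presentations, which holds because both algebras are standard graded with generators indexed by chains, respectively by lattice elements.

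The genuinely hard step is $(4)\Rightarrow(1)$, the claim that quadratic generation forces planarity. All the other implications are constructive and use only the lemma and the sorting machinery; this one must exclude every non-planar lattice, and I do not see how to do so from the material developed so far. I would therefore prove its contrapositive by exhibiting, for each lattice of dimension $n>2$, a minimal generator of the defining ideal of degree at least $n\geq 3$ — which is exactly the content of \Cref{thm:deg_n_rel}, established in \Cref{sec:non-planar}. Granting that theorem, $(4)\Rightarrow(1)$ is immediate and the equivalence closes. The main obstacle is thus entirely concentrated in the degree bound of \Cref{thm:deg_n_rel}; the present theorem amounts to organising the remaining equivalences around it.
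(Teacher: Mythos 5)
Your proposal is correct and follows essentially the same route as the paper's own proof: (1)$\Rightarrow$(2) and (1)$\Rightarrow$(5) via \Cref{lemma:planar-sort-hibi}, (2)$\Rightarrow$(3) by realising DegRevLex with variables ordered along $\preceq_{\operatorname{ch}}$ as a sorting order in the sense of \Cref{thm:order}, the easy implications (3)$\Rightarrow$(4) and (5)$\Rightarrow$(4), and (4)$\Rightarrow$(1) by contrapositive, deferred to \Cref{thm:deg_n_rel}, exactly as the paper does. The only (immaterial) difference is your variable-ordering convention, which puts $\preceq_{\operatorname{ch}}$-minimal chains as the smallest variables where the paper makes them largest; in either case the sorted monomial contains the revlex-smallest of the four variables involved, so the unsorted pair is the leading term and the argument goes through identically.
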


\begin{proof}
We will first prove ${\it (1)}\Rightarrow {\it (2)}\Rightarrow {\it (3)}\Rightarrow {\it (5)}\Rightarrow {\it (4)}\Rightarrow {\it (1)}$ and then connect these five statements to {\it (6)}.

The implication ${\it (1)} \Rightarrow {\it (2)}$ follows by \Cref{lemma:planar-sort-hibi}


${\it (2)} \Rightarrow {\it (3)}$: let us assume that $L$ has $m$ maximal chains $C_1, \ldots, C_m$, indexed such that if $C_i \preceq_{\operatorname{ch}} C_j$ then $i<j$.  We then define a $K$-algebra homomorphism $ \phi: K[T_1,\ldots, T_m] \to K[\mathcal{C}_L]$  by $T_i\mapsto t_{C_i}$ for $i=1,\ldots, m$. On the polynomial ring $K[T_1,\ldots, T_m]$ we impose the DegRevLex monomial order with $T_1 > \dots > T_m$. 
By \Cref{thm:order}, it is then enough to show that this is a sorting order. For two maximal chains $C$ and $D$ the equality $\sort(t_C,t_D)=(t_{C \wedge D},t_{C \vee D})$ lifts to the binomial $T_{i}T_{j}-T_kT_{\ell} \in \ker \phi$ where $T_{i}\mapsto t_{C}$, $T_{j}\mapsto t_{D}$, $T_{k}\mapsto t_{C \wedge D}$, $T_{\ell}\mapsto t_{C \vee D}$. Then $T_jT_j$ is the leading term with respect to DegRevLex.

The implication ${\it (3)} \Rightarrow {\it (5)}$ was proved in \cite{Anick}, and it is well known that Koszul algebras are quadratic so ${\it(5)} \Rightarrow {\it(4)}$.

 We saw in \Cref{thm:deg_n_rel} that if the lattice $L$ is not planar, then the defining binomial ideal is not quadratic. Hence ${\it (4)} \Rightarrow {\it (1)}$.

We have now proved that the first five statements are equivalent.

Now, ${\it (1)} \Rightarrow {\it (6)}$ follows by \Cref{lemma:planar-sort-hibi}. More precisely, $K[\C_L]$ is isomorphic to the Hibi ring $K[L']$.

Finally, it follows from the definition of a Hibi ring that ${\it (6)} \Rightarrow {\it (4)}$, which completes the proof. 
\end{proof}



\subsection{Hilbert series of planar chain algebras} We will now interpret the Hilbert series of the algebra $K[\C_L]$ combinatorially. A grading on $K[\C_L]$ is defined by assigning degree one to the monomials generating $K[\C_L]$. From \Cref{lemma:planar-sort-hibi} and \Cref{thm:mainplanar} we get the following description of the Hilbert function. 

\begin{Corollary}\label{cor:Hilbert_series}
For a planar FDL $L$, the vector space dimension of the graded component $K[\C_L]_i$ equals the number of ways to draw $i$ non-traversing maximal lattice paths in $L$.
\end{Corollary}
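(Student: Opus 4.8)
The plan is to exploit the explicit Gröbner basis coming from sortability. Since $L$ is planar, \Cref{lemma:planar-sort-hibi} shows that the chain monomials form a sortable set with $\sort(t_C,t_D)=(t_{C\wedge D},t_{C\vee D})$, and the proof of \Cref{thm:mainplanar} provides a sorting order on $R=K[T_1,\ldots,T_m]$. By \Cref{thm:order} the reduced Gröbner basis of the defining ideal $I_B$ consists of the binomials $T_iT_j-T_kT_\ell$ indexed by unsorted pairs, so its initial ideal $\ini(I_B)$ is generated by the quadratic monomials $T_iT_j$ for which $\{t_{C_i},t_{C_j}\}$ is unsorted. Since the grading assigns degree one to every generator $t_{C_i}$, the surjection $\phi\colon R\to K[\C_L]$ is graded and $I_B$ is homogeneous; hence $\dim_K K[\C_L]_i$ equals the number of standard monomials of degree $i$, that is, monomials of degree $i$ in $R$ that do not lie in $\ini(I_B)$.

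Next I would translate the standard-monomial condition into the language of lattice paths. A monomial $T_{i_1}\cdots T_{i_d}$ avoids $\ini(I_B)$ exactly when no two of its indices form an unsorted pair. By the first part of the proof of \Cref{lemma:planar-sort-hibi}, a pair $(t_C,t_D)$ is sorted precisely when $C$ and $D$ are comparable under $\preceq_{\operatorname{ch}}$, i.e.\ non-traversing, whereas two incomparable (traversing) chains form an unsorted pair; a repeated variable is harmless since $\sort(t_C,t_C)=(t_C,t_C)$. Consequently a standard monomial of degree $i$ corresponds exactly to a multiset of $i$ maximal chains of $L$ that are pairwise non-traversing --- equivalently, to a way of drawing $i$ non-traversing maximal lattice paths in $L$, repetitions allowed. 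As pairwise comparability forces a total order, such a multiset is the same as a multichain of length $i$ in the lattice $L'$ of \Cref{lemma:planar-sort-hibi}, recovering the familiar standard-monomial basis of the Hibi ring $K[L']\cong K[\C_L]$.

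Finally, counting the standard monomials of degree $i$ yields precisely the number of such collections of paths, which is the asserted value of $\dim_K K[\C_L]_i$. I expect the only point needing care to be the bookkeeping in the middle step: confirming that ``non-traversing'' coincides exactly with the sorted-pair condition, and that the standard-monomial count matches the count of unordered multisets of non-traversing paths. Both facts follow directly from \Cref{lemma:planar-sort-hibi}, so I anticipate no serious obstacle; the corollary is in essence a dictionary between the standard monomial basis of $K[\C_L]$ and multichains in $L'$.
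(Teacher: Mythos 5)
Your proof is correct and follows essentially the same route as the paper, which obtains the corollary directly from \Cref{lemma:planar-sort-hibi} and \Cref{thm:mainplanar}: the sorting order yields a quadratic initial ideal whose standard monomials of degree $i$ correspond exactly to multisets of $i$ pairwise sorted, i.e.\ pairwise non-traversing, maximal chains. The two details you flag --- that sorted unordered pairs coincide with comparable pairs under $\preceq_{\operatorname{ch}}$, and that repeated chains are automatically sorted --- are precisely the bookkeeping the paper leaves implicit, and you resolve both correctly.
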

By a \emph{maximal lattice path} we mean a path from $(0,0)$ to $(a,b)$ where the points are identified with elements of $L$, and $(a,b)$ is the maximal element, as described in Section \ref{subsec:alg_prop_planar}. We clarify that the maximal lattice paths are allowed to have points in common, only not to traverse.  

As our planar chain algebra $K[C_L]$ is isomorphic to the Hibi ring $K[L']$, where $L'$ is the FDL of chains as in \Cref{lemma:planar-sort-hibi}, the Hilbert series is given by \eqref{eq:Hilbert_series}. To analyse the numerator of \eqref{eq:Hilbert_series} in the case of chain algebras, we need to deduce the poset $P'$ for which $L'=\J(P')$. 

As before, we consider a lattice $L=\J(P)$ where $P$ is a poset covered by two chains $A=\{1\prec 2 \prec \dots \prec a\}$ and $B=\{1'\prec  2'\prec   \ldots \prec b'\}$. The elements of $L$ are represented by integer points $(i,j)$ with $0 \le i \le a$ and $0 \le j \le b$. The maximal chains of $L$, constituting the lattice $L'$, are depicted as paths in $\NN^2$ connecting the points $(0,0)$ and $(a,b)$. With the goal of determining the poset $P'$ it is preferable to consider \emph{cells} of $\NN^2$, rather than integer points. By a \emph{cell} we mean a $1 \times 1$ square where the corners are integer points. A lattice path $C$ from $(0,0)$ and $(a,b)$ determines a integer partition $\lambda=(\lambda_b, \ldots, \lambda_1)$ where $\lambda_i$ is the number of cells to the left of $C$ in row $i$, inside the rectangle with $(0,0)$ and $(a,b)$ as its bottom left and top right corner. Here \enquote{row $1$} means the bottom row of cells, and \enquote{row $b$} means the top row. The shape that $C$ cuts from the upper left corner of the rectangle is the Young diagram of the integer partition $\lambda$. We note that $\lambda_b \ge \dots \ge \lambda_1 \ge 0$, and it may happen that some $\lambda_i$'s are 0. For instance, the smallest and the largest chains of \Cref{fig:Grid_example} can be identified with the integer partitions $(2,1,0)$ and $(4,4,3)$, respectively. The chains $\color{Maroon}{C}$ and ${\color{ForestGreen}D}$ from \Cref{ex:gridchains} are identified with $(4,3,0)$ and $(2,2,2)$. For a proper introduction to integer partitions and Young diagrams we refer the reader to Chapter 1.7 of \cite{Stanley}. 

Young diagrams can be ordered by inclusion, and in this way constitute a distributive lattice, see \cite[Example 3.4.4, Example 3.21.2]{Stanley}. If $\mu \subset \lambda$ are two Young diagrams, the interval $[\mu, \lambda]$ consists of all Young diagrams containing $\mu$ and contained in $\lambda$. The interval  $[\mu, \lambda]$ is itself a distributive lattice.

In the next lemma we use the notation $(p)$, where $p \in P$, for the smallest ideal of $P$ containing $p$. 

\begin{Lemma}\label{lemma:youngs_lattice}
Let $P=A \sqcup B$, $L$ and $L'$ be as above. Then $L'$ is isomorphic to an interval $[\mu, \lambda]$ of Young's lattice. More precisely, the minimal and maximal elements of $L'$ are given by integer partitions  $\mu = (\mu_b, \ldots, \mu_1)$ and $\lambda=(\lambda_b, \ldots, \lambda_1)$ where
    \begin{align*}
    \mu_i &= |A \cap (i')|, \quad \text{and} \\
    \lambda_i &= \max \  k \in A \ \text{such that} \ i' \notin (k).
    \end{align*}
\end{Lemma}
\begin{proof}
 An integer partition $\lambda=(\lambda_b, \ldots, \lambda_1)$ corresponds to the path 
 \begin{align*}
 (0,0) \to (1,0) \to  \dots \to & (\lambda_1,0) \to (\lambda_1,1) \to  \dots \to (\lambda_2,1) \to (\lambda_2, 2) \to \cdots  \\
 & \dots \to (\lambda_i, i-1) \to (\lambda_i,i) \to \dots \to (\lambda_{i+1},i) \to \dots \\ 
 & \hspace{50 pt} \dots \to (\lambda_b,b-1) \to (\lambda_b,b) \to \dots \to (a,b). 
 \end{align*}
 It is clear that the partial order $\preceq_{\operatorname{ch}}$ on $L'$ translates to inclusion of Young diagrams, making $L'$ isomorphic to an interval of Young's lattice. 
 
 To get the maximal element of $L'$ we choose each $\lambda_i$ as large as possible, meaning that $(\lambda_i, i-1)$ is a valid point in $L$ but $(\lambda_i+1,i-1)$ is not. This occurs precisely when $i' \notin (\lambda_i)$ but $i' \in (\lambda_i+1)$, proving the claimed formula for the maximal element of $L'$. 

 To obtain the minimal element $\mu = (\mu_b, \ldots, \mu_1)$ we choose the smallest $\mu_i$ such that $(\mu_i,i)$ is a valid point in $L$. This gives $\mu_i=|A \cap (i')|$.  
\end{proof}

 Adapting the terminology from Chapter 7.10 of \cite{Stanley2}, the cells sitting between the minimal and maximal paths of $L'$ form the \emph{skew shape} $\lambda / \mu$.

\begin{Lemma}\label{lemma:P'_cells}
    Let $L'$, $\mu$, and $\lambda$ be as in Lemma \ref{lemma:youngs_lattice}. Define a poset $P'$ on the cells of the skew shape $\lambda / \mu$, by imposing strict increase when moving to the right along rows, and downwards along columns.  Then $L' \cong \J(P').$
\end{Lemma}
\begin{proof}
The smallest Young diagram of $L'$ containing a fixed cell $c$ is produced by taking all cells above and to the left of $c$ inside the minimal rectangle containing $L$. This Young diagram is join irreducible, as the only cell which can be removed, producing a new Young diagram of $L'$, is $c$.
Conversely, every Young diagram where only one cell is removable can be obtained this way. We have now seen that the elements of $P'$ are identified with the join irreducibles of $L'$, and hence $L'\cong \J(P')$.     
\end{proof}

\begin{Remark}
From the theory of Hibi rings we know that the Krull dimension of the Hibi ring $K[L']$ is given by $|P'|+1$, and as discussed above, the elements of the poset $P'$ are identified with cells of a skew shape $\lambda / \mu$. On the other hand, $K[L']\iso K[\C_L]$, and from \Cref{thm:dim} we know that $\dim(K[\C_L])=|L|-|P|$. 
Let us check that these formulas agree, i.\,e.\ that $|L|-|P|$ equals the number of cells of the skew shape $\lambda / \mu$, increased by one. First, note that the cells are in bijection with the elements of $L$ that have exactly two lower neighbours, by identifying a cell with its top right corner in the rectangular grid. 
Now, $P$ is the set of join-irreducible elements of $L$, that is, those with exactly one lower neighbour. Therefore, $|L|-|P|$ is the number of elements with zero or two lower neighbours, as two is the maximal number of neighbours in the planar case. The only element without a lower neighbour is the minimal element of $L$, and hence $|L|-|P|=|P'|+1$.
\end{Remark}

The chains of $P'$ are realised as paths between cells, taking steps down and to the right (the directions in which Young diagrams increase). In particular it follows that $K[L']$ is Gorenstein if and only if all maximal paths of cells have the same length. 

\begin{Example}
    Let $P$ and $L$ be as in \Cref{fig:Grid_example}. As previously noted, $L'$ can be identified with the interval $[(2,1,0), (4,4,3)]$ of Young's lattice. Then the poset $P'$ consists of elements $p_1, p_2, \ldots, p_8$, identified with the skew shape of the cells of $L$ as illustrated in \Cref{fig:SYT}. Denoting the partial order on $P'$ by  $\prec'$, the maximal chains of $P'$ are
    \begin{align*} &\{ p_1 \prec' p_2 \prec' p_5\}, \quad \{p_1 \prec' p_4 \prec' p_5\}, \quad \{ p_1 \prec' p_4 \prec' p_8\} , \quad \{ p_3 \prec' p_4 \prec' p_5\},\\
    & \{p_3 \prec' p_4 \prec' p_8\}, \quad \{p_3 \prec' p_7 \prec' p_8 \}, \quad \text{and} \quad  \{p_6 \prec' p_7 \prec' p_8\}.     
    \end{align*}
    As they all have the same length, $K[\C_L]$ is Gorenstein. Indeed, the Hilbert series is 
    \[
    \frac{1+18z+65z^2+65z^3+18z^4+z^5}{(1-z)^9}.
    \]
    We see that this algebra has Krull dimension $9$, which can be understood as the number of cells increased by $1$, or as $|L|-|P|=16-7$.
\end{Example}

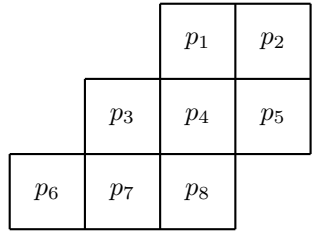
\begin{figure}
    \centering
    
\begin{tikzpicture}
\draw[thick] (0,0) -- (3,0);
\draw[thick] (0,1) -- (4,1);
\draw[thick] (1,2) -- (4,2);
\draw[thick] (2,3) -- (4,3);
\draw[thick] (0,0) -- (0,1);
\draw[thick] (1,0) -- (1,2);
\draw[thick] (2,0) -- (2,3);
\draw[thick] (3,0) -- (3,3);
\draw[thick] (4,1) -- (4,3);

\node at (2.5,2.5) {$p_1$};
\node at (3.5,2.5) {$p_2$};
\node at (1.5,1.5) {$p_3$};
\node at (2.5,1.5) {$p_4$};
\node at (3.5,1.5) {$p_5$};
\node at (0.5,0.5) {$p_6$};
\node at (1.5,0.5) {$p_7$};
\node at (2.5,0.5) {$p_8$};

\end{tikzpicture}
    \caption{A poset $P'$ is obtained by imposing strict increase along rows and columns}
    \label{fig:SYT}
\end{figure}

With the description of $P'$ as cells in a skew shape, a linear extension of $P'$ is the same as filling in the numbers $1, \ldots, |P'|$ in the cells, such that the numbers are increasing along rows and columns. In other words, a linear extension of $P'$ is a \emph{standard Young tableaux} (SYT). In a SYT $\mathcal{T}$ we say that $i$ is an \emph{ascent} if $i+1$ sits in a row above the row of $i$. We denote the number of ascents in $\mathcal{T}$ by $\operatorname{asc}(\mathcal{T})$.

\begin{Theorem}\label{thm:HS_ascents}
    Let $L$ be a planar FDL, and let $d$ be the number of cells in the corresponding skew shape $\lambda / \mu$ as in Lemma \ref{lemma:P'_cells}. The Hilbert series of the chain algebra $K[\C_L]$ is 
    \[
 \frac{\sum_{\mathcal{T}}z^{\operatorname{asc}(\mathcal{T})}}{(1-z)^{d+1}}
    \]
    where the sum is taken over all SYT's $\mathcal{T}$ of the skew shape $\lambda / \mu$. 
\end{Theorem}

\begin{proof}
    Let us index the elements  of $P'=\{p_1, \ldots, p_d\}$ so that 
    $p_1, p_2 \ldots, p_{s_1}$ are the cells in the top row, and $p_{s_1+1}, p_{s_1+2}, \ldots, p_{s_2}$ are cells of the second row, and so on, in the skew shape $\lambda / \mu$. See also \Cref{fig:SYT}. It is clear that this indexing respects the partial order on $P'$. Next, let $w$ be a linear extension of $P'$. By the expression \eqref{eq:Hilbert_series} for the Hilbert series, we are done if we can prove that the descents of the linear extension $w$ are the ascents of $w$ interpreted as SYT. So, suppose $i$ is a descent of $w$. Then we have indices $j>k$ such $w(p_j)=i$ and $w(p_k)=i+1$. From the way we indexed the cells it follows that $p_j$ either sits to the right of $p_k$ in the same row, or in a row below $p_k$. However, the assignment  $w(p_j)=i$ and $w(p_k)=i+1$ excludes the possibility of $p_j$ being to the right of $p_k$ in the same row. Then $i$ satisfies the definition of an ascent. Conversely, suppose $i$ is an ascent in the SYT given by $w$. Then there are $j$ and $k$ such that $w(p_j)=i$ and $w(p_k)=i+1$ where $p_k$ is in a row above $p_j$. By the choice of indexing $k<j$, so $i$ is  descent of $w$. 
\end{proof}

The $h$-polynomial of a Hibi ring $K[\J(P)]$, i.\,e.\ the numerator of \eqref{eq:Hilbert_series}, is the so called $(P, \omega)$-Eulerian polynomial, where $\omega$ denotes any natural labelling of the poset $P$. It was conjectured by Neggers \cite{Neggers} that these polynomials have real roots, and Stanley conjectured that the same holds even if the labelling $\omega$ is not natural. Recall that a polynomial having real roots implies that the coefficient sequence is unimodal. The Neggers-Stanley conjecture has been proved in several special cases, but in 2004 counterexamples were presented by Brändén \cite{Branden}. Later, counterexamples with natural labelling were provided by Stembridge \cite{Stem}. However, the weaker version of the conjecture, namely that $(P, \omega)$-Eulerian polynomials are unimodal, is an open problem. It is proved in \cite{RW} that the $(P, \omega)$-Eulerian polynomials are unimodal when $P$ is pure, and $\omega$ natural. Translated to Hibi rings, this means that the $h$-polynomial of a Gorenstein Hibi ring is unimodal. 

Brenti \cite{Brenti} proved the Neggers-Stanley conjecture in the case when the poset is identified with a Young diagram. So, when the minimal lattice path of $L$ is $(0,0) \to \dots \to (0,b) \to \dots \to (a,b)$, the $h$-polynomial of $K[\C_L]$ is unimodal. To the best of the authors knowledge, it is an open problem whether all the $h$-polynomials appearing in \Cref{thm:HS_ascents} are unimodal. As a final remark of this section we consider the rectangular case. 
\begin{Remark}
    If the poset $P$ is given by two chains of $\{1 \prec \dots \prec a\}$ and $\{1' \prec \dots \prec b'\}$, and no additional relations, the skew shape in Lemma \ref{lemma:P'_cells} is just an $a \times b$ rectangle. The number of SYT's with precisely $k$ ascents are counted by the so called \emph{generalised Narayana numbers} $N(a,b,k)$, see \cite[Remark 1.3]{Sulanke}. An explicit formula for $N(a,b,k)$ is given in \cite[Proposition 1]{Sulanke}. By \Cref{thm:HS_ascents} the $h$-polynomial of $K[\C_L]$ is the Narayana polynomial $N_{a,b}(z)=\sum_k N(a,b,k)z^k$. As the poset $P'$ is pure, the algebra $K[\C_L]$ is Gorenstein, and we recover the symmetry of $N_{a,b}(z)$ proved in \cite[Corollary 1]{Sulanke}.
\end{Remark}

\section{Hibi rings and sortable monomials}
It was proved in \cite[Theorem 5.3]{EHM} that certain toric rings, which generalise Hibi rings of FDLs, are isomorphic to algebras generated by sortable sets of monomials. This gives two quadratic generating sets for the defining binomial ideal of a toric Hibi ring: the binomials $x_Ix_J-x_{I\wedge J}x_{I \vee J}$ from the definition of a Hibi ring, and the \enquote{sorting binomials} \eqref{eq:sort_binomial}. 
We saw in \Cref{thm:mainplanar} that for Hibi rings isomorphic to planar chain algebras $\sort(m_I,m_J)=(m_{I\wedge J}, m_{I \vee J})$, and as a consequence the two generating sets of binomials coincide. 
In this last section we prove that this property in fact holds for any Hibi ring of an FDL. 

\begin{Theorem}
 The Hibi ring of an FDL $L$ can be realised as generated by a sortable set of monomials indexed by the elements of $L$. Moreover, for $I, J \in L$  we have $\sort(m_I,m_J)=(m_{I\wedge J}, m_{I \vee J})$.
\end{Theorem}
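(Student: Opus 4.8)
The plan is to exhibit an explicit squarefree toric representation of $K[L]$ in which all generators have the same degree, so that sorting is even defined, and then to read off the sorting from the total order on the variables. By Birkhoff's theorem we may write $L \iso \J(P)$ with $P = \{p_1, \dots, p_d\}$, and identify each element of $L$ with an order ideal $I$ of $P$; recall that in $\J(P)$ the meet and join are intersection and union. First I would introduce $2d$ variables $a_1, b_1, \dots, a_d, b_d$ and set
\[
m_I \;=\; \prod_{p_i \in I} a_i \;\prod_{p_i \notin I} b_i,
\]
a squarefree monomial of degree $d$. Distinct ideals give distinct monomials, and for any $I, J$ the monomials $m_I m_J$ and $m_{I \wedge J} m_{I \vee J}$ coincide: for each index $i$ the number of $a_i$-factors is $[p_i \in I] + [p_i \in J] = [p_i \in I \cap J] + [p_i \in I \cup J]$, and similarly for the $b_i$-factors. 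In particular the Hibi relations already hold in $K[m_I : I \in L]$.

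The key step is the choice of total order and the resulting sorting computation. I would order the variables by $b_1 < a_1 < b_2 < a_2 < \dots < b_d < a_d$, so that the two variables of index $i$ form a contiguous block and, within it, $b_i$ precedes $a_i$. For a pair $(m_I, m_J)$ the degree-$2d$ product contributes to block $i$ exactly two letters from $\{a_i, b_i\}$: two copies of $a_i$ if $p_i \in I \cap J$, one of each if $p_i$ lies in exactly one of $I, J$, and two copies of $b_i$ if $p_i \notin I \cup J$. Since the blocks are consecutive in the order, the sorted word is the concatenation of the $d$ sorted blocks, with block $i$ occupying positions $2i-1$ and $2i$. Hence $v_1$ (the odd positions) picks up the smaller letter of each block and $v_2$ (the even positions) the larger. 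Reading these off gives $v_1 = m_{I \cap J}$ and $v_2 = m_{I \cup J}$, that is, $\sort(m_I, m_J) = (m_{I \wedge J}, m_{I \vee J})$; the convention $b_i < a_i$ is precisely what places the meet first and the join second.

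It then remains to assemble the conclusions. Because $L$ is closed under $\wedge$ and $\vee$, both outputs lie in $B = \{m_I : I \in L\}$, so $B$ is sortable. Applying \Cref{thm:order} to the presentation $\phi\colon K[T_I : I\in L]\to K[B]$, $T_I \mapsto m_I$ (together with the existence of a sorting order noted thereafter), the reduced Gröbner basis $G$ of $I_B = \ker\phi$ consists of the binomials $T_I T_J - T_{I \wedge J} T_{I \vee J}$ over unsorted pairs $\{m_I, m_J\}$; and our formula shows a pair is sorted exactly when $I$ and $J$ are comparable, so these binomials range over the incomparable pairs, which are precisely the nontrivial Hibi relations. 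Finally, to identify $K[B]$ with the Hibi ring I would use two inclusions: $I_{\mathrm{Hibi}} \subseteq I_B$ because $m_I m_J = m_{I \wedge J} m_{I \vee J}$, and $I_B = (G) \subseteq I_{\mathrm{Hibi}}$ because every element of $G$ is a Hibi relation; hence $I_B = I_{\mathrm{Hibi}}$ and $K[B] \iso K[L]$.

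I expect the main obstacle to be conceptual rather than computational: finding the degree-homogenizing representation with two variables per element of $P$ that makes sorting applicable at all, and pinning down the within-block order $b_i < a_i$ that yields the meet-then-join pairing rather than its reverse. Once these choices are fixed, the block decomposition of the sorted word and the two-inclusion comparison of ideals are routine.
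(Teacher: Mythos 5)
Your proof is correct, and it runs on the same underlying mechanism as the paper's, but instantiated with a different chain decomposition, so a comparison is worthwhile. The paper fixes a \emph{minimal} chain decomposition $P=A_1\cup\dots\cup A_n$ into $n=\operatorname{width}(P)$ disjoint chains, encodes an ideal $I$ by the degree-$n$ monomial $m_I=x^{(1)}_{k_1}\cdots x^{(n)}_{k_n}$ with $k_i=|I\cap A_i|$, and orders the variables first by block and then by subscript; the engine of the argument is that an ideal meets each chain in an initial segment, so meet and join of ideals become componentwise min and max of exponent vectors, and the block-contiguous variable order makes sorting act block by block. Your construction is exactly this scheme for the decomposition of $P$ into \emph{singleton} chains: your $b_i,a_i$ are the paper's $x^{(i)}_0,x^{(i)}_1$, and your order $b_1<a_1<\dots<b_d<a_d$ is the paper's block order. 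What your choice buys: no appeal to Dilworth's theorem and no choice of decomposition at all (the monomial is just the characteristic vector of $I$, and the initial-segment fact is trivial for singletons), so the construction is canonical and the blockwise sorting computation is immediate. What the paper's choice buys: a more economical presentation --- monomials of degree equal to the width, in $|P|+n$ variables rather than degree $|P|$ in $2|P|$ variables --- and an encoding that for $n=2$ coincides with the grid labelling $t_{ij}$ of Section 3, tying the theorem back to the planar picture. Your closing step (the two inclusions $I_{\mathrm{Hibi}}\subseteq I_B$ and $I_B=(G)\subseteq I_{\mathrm{Hibi}}$, giving $K[B]\iso K[L]$) is correct and makes explicit an identification that the paper's proof leaves implicit.
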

\begin{proof}
Let $L=\J(P)$ and let $n$ be the width of $P$. We will fix a minimal chain decomposition of $P$, that is, disjoint chains $A_1, A_2,\ldots, A_n$ such that their set theoretical union equals the underlying set of $P$. To each $I\in\mathcal{J}(P)$ we associate a vector $v_I=(k_1,\ldots, k_n) \in  \mathbb{N}^n$, where $k_i=|I\cap A_i|$ for all $i=1,\ldots,n$. In other words, the coordinates of $v_I$ tell us how many elements in $I$ belong to respective chains. We label $I$ with the monomial $m_I=x^{(1)}_{k_1}\cdots x^{(n)}_{k_n}$ in the polynomial ring $K[x^{(1)}_{0},\ldots, x^{(1)}_{|A_1|}, \dots, x^{(n)}_{0},\ldots, x^{(n)}_{|A_n|}]$ on $|P|+n$ variables divided into $n$ blocks. Each monomial $m_I$ is of degree $n$, with exactly one variable of each block. Let $v_I=(k_1,\ldots, k_n)$ and $v_J=(\ell_1,\ldots \ell_n)$. Then $v_{I\cap J}=(\min(k_1,\ell_1),\ldots, \min(k_n,\ell_n))$ and $v_{I\cup J}=(\max(k_1,\ell_1),\ldots, \max(k_n,\ell_n))$. Indeed, $I$ contains the $k_1$ \emph{smallest} elements of chain $A_1$ (otherwise $I$ is not an ideal). Note that this argument also proves the injectivity of the assignment of vectors to ideals.  Similarly $J$ contains the $\ell_1$ \emph{smallest} elements of $A_1$.  Therefore, the intersection of $I$ and $J$ contains exactly the $\min(k_1,l_1)$ smallest elements of $A_1$, and the same for $A_2, \ldots, A_n$. An analogous argument applies to $v_{I\cup J}$. Sorting the variables firstly by upper index and secondly by lower index we get
\begin{align*}
\sort(m_I,\, m_J) &= \sort(x^{(1)}_{k_1}\cdots x^{(n)}_{k_n},\ x^{(1)}_{\ell_1}\cdots x^{(n)}_{\ell_n})\\
&=(x^{(1)}_{\min(k_1,\ell_1)}\cdots x^{(n)}_{\min(k_n,\ell_n)},\ x^{(1)}_{\max(k_1,\ell_1)}\cdots x^{(n)}_{\max(k_n,\ell_n)})\\ &= (m_{I\cap J},\, m_{I\cup J}) \qedhere 
\end{align*}
\end{proof}

\begin{Example}
Let $P=\{1,2,3\}$ be the antichain on three elements. Then $L=\J(P)=\mathcal{B}_3$. The toric algebra generated by all the monomials in \Cref{fig:B3} is isomorphic to the Hibi ring of $L$, using the indexing introduced above. If we take for example $I=\{1\}$ and $J=\{2,3\}$ we get $m_I=x^{(1)}_1x^{(2)}_0x^{(3)}_0$ and $m_J=x^{(1)}_0x^{(2)}_1x^{(3)}_1$. Applying the sorting of monomials as above gives 
\[
\sort(m_I, m_J)=(x^{(1)}_0x^{(2)}_0x^{(3)}_0, \,  x^{(1)}_1x^{(2)}_1x^{(3)}_1) = (m_{I \cap J}, m_{I \cup J}).
\]
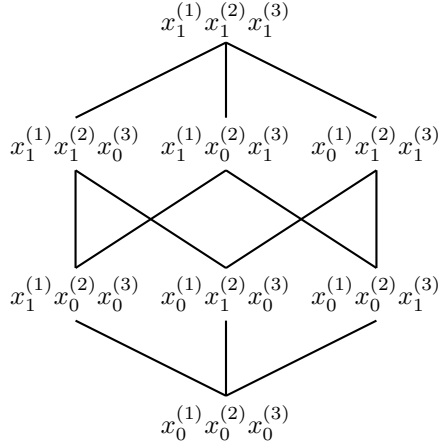
\begin{figure}
\centering
\begin{tikzpicture}

\node [below, thin] at (0,1) {$x^{(1)}_0x^{(2)}_0x^{(3)}_0$};
\node [above, thin] at (-2,2) {$x^{(1)}_1x^{(2)}_0x^{(3)}_0$};
\node [above, thin] at (0,2) {$x^{(1)}_0x^{(2)}_1x^{(3)}_0$};
\node [above, thin] at (2,2) {$x^{(1)}_0x^{(2)}_0x^{(3)}_1$};
\node [above, thin] at (-2,4) {$x^{(1)}_1x^{(2)}_1x^{(3)}_0$};
\node [above, thin] at (0,4) {$x^{(1)}_1x^{(2)}_0x^{(3)}_1$};
\node [above, thin] at (2,4) {$x^{(1)}_0x^{(2)}_1x^{(3)}_1$};
\node [thin] at (0,6) {$x^{(1)}_1x^{(2)}_1x^{(3)}_1$};

\draw[thick] (0,1)--(-2,2);
\draw[thick] (0,1)--(0,2);
\draw[thick] (0,1)--(2,2);

\draw[thick] (-2,2.7)--(-2,4);
\draw[thick] (2,2.7)--(2,4);

\draw[thick] (-2,2.7)--(0,4);
\draw[thick] (2,2.7)--(0,4);

\draw[thick] (0,2.7)--(-2,4);
\draw[thick] (0,2.7)--(2,4);

\draw[thick] (-2,4.7)--(0,5.7);
\draw[thick] (0,4.7)--(0,5.7);
\draw[thick] (2,4.7)--(0,5.7);
\end{tikzpicture}
\caption{A monomial generating set for the Hibi ring $K[\B_3]$}
\label{fig:B3}
\end{figure}
\end{Example}

We remark that not every algebra generated by a sortable set of monomials is a Hibi ring, as shown in the next example.

\begin{Example}
Consider the squarefree Veronese subalgebra of $K[x,y,z,w]$ generated by all squarefree monomials of degree two. We have 
 \[A=K[xy,xz,xw,yz,yw,zw]\iso K[T_1,\ldots, T_6]/(T_1T_6-T_2T_5, \, T_3T_4-T_2T_5)\]
 and the set of monomials is clearly sortable. 
 This algebra has Krull dimension $4$, so if it is a Hibi ring of  a lattice $\J(P)$, then $|P|=3$. There are only five non-isomorphic $3$-element posets, but none of these five Hibi rings have the same Hilbert series as $A$.
 \end{Example}

\section*{Acknowledgement}
First of all we would like to thank Jürgen Herzog for introducing the notion of chain algebras of finite distributive lattices, for suggesting the statement of \Cref{thm:mainplanar}, and for many helpful comments throughout our work with this project. Many thanks to Per Alexandersson for his insightful remarks about $(P, \omega)$-Eulerian polynomials. 
We also thank Aldo Conca for the rewarding discussions around the topics of this paper. Finally, we thank the anonymous referees for their careful reading and suggestions. 

\medskip

The first author was supported by a fellowship from the Wenner-Gren Foundations (grant WGF2022-0052), and the second author was supported by the grant KAW-2019.0512 from the Knut and Alice Wallenberg Foundation.

\medskip 

\bibliographystyle{plain}
\bibliography{Poset_references}

\end{document}